\newcommand{\R}{\mathbb{R}}
\newcommand{\Z}{\mathbb{Z}}
\newcommand{\Q}{\mathbb{Q}}
\newcommand{\N}{\mathbb{N}}
\newcommand{\C}{\mathbb{C}}
\renewcommand{\sl}{\operatorname{SL}(2,\Z)}
\newtheorem{vorlage}{}[section]
\newtheorem{prop}[vorlage]{Proposition}
\newtheorem{lemma}[vorlage]{Lemma}
\newtheorem{cor}[vorlage]{Corollary}
\newtheorem{theorem}[vorlage]{Theorem}
\newtheorem{definition}[vorlage]{Definition}
\newtheorem{rem}[vorlage]{Remark}
\theoremstyle{nonumberbreak}
\theoremstyle{nonumberplain}
\newtheorem{proof}{Proof}
\title{The graded ring of modular forms on the Cayley half-space of degree two}
\author{C. Dieckmann, A. Krieg, M. Woitalla}
\begin{document}
\maketitle
\begin{abstract}
A result by Hashimoto and Ueda says that the graded ring of modular forms with respect to $\operatorname{SO}(2,10)$ is a polynomial ring in modular forms of weights $4, 10, 12, 16, 18, 22, 24, 28, 30, 36, 42$. In this paper we show that one may choose Eisenstein series as generators. This is done by calculating sufficiently many Fourier coefficients of the restrictions to the Hermitian half-space. Moreover we give two constructions of the skew symmetric modular form of weight 252.
\end{abstract}
\textbf{Keywords:} modular forms; orthogonal group; Cayley half-plane; graded ring; Eisenstein series
\vspace{3mm}

\noindent\textbf{MSC-Classification:} 11F55

\section{Introduction}
\noindent Let $V$ be a real quadratic space of signature $(2,10)$. The bilinear form of $V$ is denoted by $(\cdot,\cdot)$. The group of all isometries of $V$ is called the \textit{orthogonal group of} $V$ and is given by
\[\operatorname{O}(V)=\{g\in\operatorname{GL}(V)\,|\,(gv,gv)=(v,v)\;\;\text{for all}\;\; v\in V\}\,.\]
By $\operatorname{SO}(V)$ we denote the subgroup of index two, which is equal to the kernel of the determinant-character. We obtain another subgroup $\operatorname{O}(V)^{+}$ of index two  as the kernel of the real spinor norm. The intersection of the groups $\operatorname{SO}(V)$ and $\operatorname{O}(V)^{+}$ is denoted by $\operatorname{SO}(V)^{+}$. This is the connected component of the identity and is well-known to be a  semisimple and noncompact Lie group, compare e.g. \cite{Kp}. Its maximal compact subgroup is given by $\operatorname{SO}(2)\times\operatorname{SO}(10)$. In this text we fix $L=E_{8}$ to be the (up to isometries) unique even positive definite unimodular lattice in dimension 8. 
\noindent We denote the Gram matrix of $L$ by $S$ and define the even unimodular lattice of signature $(2,10)$ by
\[\operatorname{Gram}(L_{2})=\begin{pmatrix}
					  0&0&0&0&1\\
					  0&0&0&1&0\\   
					  0&0&-S&0&0\\
					  0&1&0&0&0\\
					  1&0&0&0&0
                                         \end{pmatrix}\in\operatorname{GL}(L_{2})\subseteq \operatorname{GL}(V)\,.\]
We consider the arithmetic subgroup 
 \[\operatorname{O}(L_{2})^{+}=\{g\in\operatorname{O}(V)^{+}\,|\,g\,L_{2}\subseteq L_{2}\}\] 
and define the sublattice $L_{1}$ by
\[S_{1}:=\operatorname{Gram}(L_{1})=\begin{pmatrix}
					  0&0&1\\   
					  0&-S&0\\
					  1&0&0\\
                                         \end{pmatrix}\in\operatorname{GL}(L_{2})\subseteq \operatorname{GL}(V)\,.\]
The following Proposition is well-known, confer \cite{GHS}, Theorem 1.7, and \cite{KW}, Corollary 2. 
\begin{prop}
 The commutator subgroup of $\operatorname{SO}(L_{2})^{+}$ is trivial. The group of characters of $\operatorname{O}(L_{2})^{+}$ is a group of order two, which is generated by the determinant-character.
\end{prop}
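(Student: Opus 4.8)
The plan is to reduce both assertions to the fact that $\operatorname{SO}(L_2)^{+}$ is a perfect group, which I would take from the cited literature, and then to read off the character group by a short index computation. The structural input is that $L_2 \cong U \oplus U \oplus E_8(-1)$, where $U$ is the hyperbolic plane; thus $L_2$ is even, unimodular, of signature $(2,10)$, and, importantly, it contains two orthogonal hyperbolic planes. Since $L_2$ is unimodular its discriminant group is trivial, so the discriminant kernels coincide with the full groups: $\widetilde{\operatorname{SO}}(L_2)^{+} = \operatorname{SO}(L_2)^{+}$ and $\widetilde{\operatorname{O}}(L_2)^{+} = \operatorname{O}(L_2)^{+}$.

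For the first sentence I would invoke \cite[Theorem 1.7]{GHS} together with \cite[Corollary 2]{KW}: for an even lattice of signature $(2,n)$ with $n \ge 3$ that splits off two hyperbolic planes, the group $\widetilde{\operatorname{SO}}(L)^{+}$ equals its own commutator subgroup. Applied to $L = L_2$ this is precisely the statement that $\operatorname{SO}(L_2)^{+}$ is perfect. I would not reprove it; the mechanism behind it is that $\operatorname{SO}(L_2)^{+}$ is generated by Eichler transvections and that, once two hyperbolic planes are available, every such transvection can be written as a product of commutators.

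For the second sentence, recall that the determinant and the real spinor norm are two surjective characters $\operatorname{O}(L_2) \to \{\pm 1\}$ with distinct kernels $\operatorname{SO}(L_2)$ and $\operatorname{O}(L_2)^{+}$; distinctness is witnessed by a reflection in a suitable anisotropic vector of $L_2$ that lies in $\operatorname{O}(L_2)^{+}$ but has determinant $-1$. Hence $\operatorname{SO}(L_2)^{+} = \operatorname{SO}(L_2) \cap \operatorname{O}(L_2)^{+}$ has index two in $\operatorname{O}(L_2)^{+}$, and the quotient $\operatorname{O}(L_2)^{+}/\operatorname{SO}(L_2)^{+} \cong \Z/2\Z$ is generated by the class of an element with determinant $-1$. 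Now any character $\chi\colon \operatorname{O}(L_2)^{+} \to \C^{\times}$ restricts trivially to the perfect subgroup $\operatorname{SO}(L_2)^{+}$, hence factors through this quotient of order two, so the character group has order at most two. Since the determinant restricts to a nontrivial character of $\operatorname{O}(L_2)^{+}$, with kernel there equal to $\operatorname{SO}(L_2)^{+} \subsetneq \operatorname{O}(L_2)^{+}$, the character group has order exactly two and is generated by the determinant-character.

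I expect the only genuine obstacle to be the perfectness of $\operatorname{SO}(L_2)^{+}$, which is exactly what \cite[Theorem 1.7]{GHS} supplies for lattices with two hyperbolic planes; everything after that is bookkeeping. The one routine point still needing a moment of care is the verification that $\operatorname{SO}(L_2)$ and $\operatorname{O}(L_2)^{+}$ are genuinely distinct index-two subgroups of $\operatorname{O}(L_2)$, equivalently that $\operatorname{O}(L_2)^{+}$ properly contains $\operatorname{SO}(L_2)^{+}$, without which the character group would collapse to the trivial group.
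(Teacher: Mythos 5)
Your proposal is correct and is essentially the paper's own approach: the paper offers no argument at all, merely declaring the proposition well known with references to \cite{GHS}, Theorem 1.7, and \cite{KW}, Corollary 2, and you likewise take the hard input (that the discriminant kernel, here all of $\operatorname{SO}(L_{2})^{+}$ by unimodularity of $L_{2}\cong U\perp U\perp E_{8}(-1)$, is perfect) from \cite{GHS} and then supply the routine index-two bookkeeping, with a $-2$-reflection as the determinant $-1$ element of $\operatorname{O}(L_{2})^{+}$, exactly the content delegated to \cite{KW}. The only point worth noting is that the first sentence of the proposition, read literally, asserts the commutator subgroup is trivial (i.e.\ the group is abelian), which is false; like the cited sources you correctly interpret it as the statement that the commutator factor group is trivial, i.e.\ that $\operatorname{SO}(L_{2})^{+}$ equals its own commutator subgroup.
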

We extend the bilinear form of $V$ to $V\otimes\C$ by $\C$-linearity. The group $\operatorname{O}(V)^{+}$ acts on the domain
\[\mathcal{D}=\{[\mathcal{Z}]\in\mathbb{P}(V\otimes\C)\,|\,(\mathcal{Z},\mathcal{Z})=0\,,\,(\mathcal{Z},\overline{\mathcal{Z}})>0\}^{+}\] 
as a linear group. The superscript means that we have chosen one of the two connected components. There is an equivalent affine model for this domain given by
\begin{equation}\label{affine model for hom domain}
\mathcal{H}(L_{2})=
\left\{\begin{pmatrix}
         \omega\\z\\\tau
        \end{pmatrix}
\in \C\times (L\otimes \C)\times \C\,\left|\begin{aligned}
&\omega_{i},\tau_{i}>0\,,&\\&2\omega_{i}\tau_{i}-(z_{i},z_{i})>0
\end{aligned}\right.\right\}\,,
\end{equation}
where we have used the abbreviations
\[\omega_{i}:=\operatorname{Im}(\omega),\quad \tau_{i}:=\operatorname{Im}(\tau),\quad z_{i}:=\operatorname{Im}(z)\,.\]
The real orthogonal group acts on this domain according to
\[Z\mapsto M\langle Z\rangle =M\{Z\}^{-1}\cdot(-\frac{1}{2}(Z,Z)b+KZ+c)\,,\]
where
\[\begin{aligned}
   &Z\in \mathcal{H}(L_{2})\,,&&M=\begin{pmatrix}\alpha & a& \beta \\ b & K & c \\ \gamma & d & \delta \end{pmatrix}\in \operatorname{O}(V)^{+}\,,\,K\in\operatorname{Mat}(10,\R)&\\
   &&&M\{Z\}:=-\frac{\gamma}{2}(Z,Z)+dZ+\delta\,.&
  \end{aligned}\]
We introduce the Cayley numbers.
\begin{definition}
 The \textit{Cayley numbers} $\mathcal{C}$ are defined as an 8-dimensional algebra over $\R$ with basis $e_{0},\dots, e_{7}$. They satisfy the following multiplication rules:
\begin{enumerate}[(i)]
 \item $xe_{0}=x=e_{0}x$ for all $x\in \mathcal{C}$,
 \item $e_{j}^{2}=-e_{0}$ for $j=1,\dots,7$,
 \item $e_{1}e_{2}e_{4} = e_{2} e_{3} e_{5} = e_{3} e_{4} e_{6} = e_{4} e_{5} e_{7} = e_{5} e_{6} e_{1} = e_{6} e_{7} e_{2} = e_{7} e_{1} e_{3} = -e_{0} .$
\end{enumerate}
For any $x\in\mathcal{C}$ we write $x=\sum_{j=0}^{7}{x_{j}e_{j}}$. We define the real part as $\operatorname{Re}(x):=x_{0}$ and embed $\R$ into $\mathcal{C}$ by $\R e_{0}$. The involution on $\mathcal{C}$ is the map
\[\mathcal{C}\to\mathcal{C}\,,\,x\mapsto \overline{x}=2x_{0}-x\,,\]
and the norm of $x$ is given by
\[N(x):=\overline{x}x=x\overline{x}=\sum_{j=0}^{7}{x_{j}^{2}}\,.\]
\end{definition}
A matrix $H\in\operatorname{Mat}(2,\mathcal{C})$ is called \textit{Hermitian}, if it has the shape
\[H=\begin{pmatrix}
     a&b\\\overline{b}& d
    \end{pmatrix},\quad\text{ where }\quad a,d\in\R,b\in\mathcal{C}\,.\]
In this case the determinant of $H$ can be defined as 
\[\det(H):=ad-N(b)\,.\]
For $A,B\in\operatorname{Mat}(m,n,\mathcal{C})$ we define the \textit{trace form} by
\[\mathcal{T}(A,B):=\frac{1}{2}\operatorname{trace}(A\overline{B}^{tr}+B\overline{A}^{tr})\,.\] 
A Hermitian matrix $H\in\operatorname{Her}(2,\mathcal{C})$ is called \textit{positive definite}, if 
\[\mathcal{T}(H,g\overline{g}^{tr})>0\quad\text{ for all }0\neq g\in\mathcal{C}^{2}\,.\]
This is equivalent to $a>0$ and $\det H>0$. In this case we write $H>0$.  Let $\mathcal{C}_{\C}:=\mathcal{C}\otimes_{\R}\C$ be the complexification of $\mathcal{C}$. The associated half-plane is the set
\[\mathbb{H}_{2}(\mathcal{C}):=\{Z=X+iY\in \operatorname{Mat}(2,\mathcal{C}_{\C})\,|\,X,Y\in\operatorname{Her}(2,\mathcal{C})\,,\,Y>0\}\,.\]
The half-plane $\mathbb{H}_{2}(\mathcal{C})$ is biholomorphically equivalent to $\mathcal{H}(L_{2})$ and $\operatorname{O}(V)^{+}$ acts on $\mathbb{H}_{2}(\mathcal{C})$ as a group of biholomorphic automorphisms.

\section{The modular group for the even unimodular lattice of signature (\text{$\mathbf{2,10}$)}}
\noindent We define the algebra of \textit{integral Cayley numbers} $\mathcal{O}$ as the $\Z$-module with basis 
\[e_{0},e_{1},e_{2},e_{4},\frac{e_{1}+e_{2}+e_{3}-e_{4}}{2},\frac{e_{5}-e_{0}-e_{1}-e_{4}}{2},\frac{e_{6}-e_{0}+e_{1}-e_{2}}{2},\frac{e_{2}-e_{0}+e_{4}-e_{7}}{2}\,.\]
These elements form the so called \textit{Coxeter-basis}. We turn $\mathcal{O}$ into a positive definite even lattice with quadratic form $N(\cdot)$. The corresponding bilinear form is denoted by $\sigma(a,b)=\overline{a}b+\overline{b}a$. Any $z\in \mathcal{C}_{\C}$ has a unique decomposition as $z=z_{r}+iz_{i}$, where $z_{r},z_{i}\in \mathcal{C}$. As a vector space over the real numbers we have $\mathcal{C}\cong L\otimes\R$. The following biholomorphic map enables us to identify $\mathbb{H}_{2}(\mathcal{C})$ with $\mathcal{H}(L_{2})$
\[\Psi\,:\,\mathcal{H}(L_{2})\to\mathbb{H}_{2}(\mathcal{C})\,,\,(\omega,z,\tau)^{tr}\mapsto Z:=\begin{pmatrix}
                                                                            \omega&z_{r}+iz_{i}\\
									      \overline{z_{r}}+i\overline{z_{i}}&\tau
                                                                           \end{pmatrix}\,.\]
By virtue of the identity 
\begin{equation}\label{determinant-form identity}
 \omega\tau-N(z)=\det(\Psi(\omega,z,\tau))
\end{equation}
 for each $(\omega,z,\tau)\in\mathcal{H}(L_{2})$, we are able to establish the following Lemma. 
\begin{lemma}\label{lemma:biholomorphic correspondence of modular groups}
 We have a homomorphism of groups 
\[\operatorname{O}(V)^{+}\to\operatorname{Bih}(\mathbb{H}_{2}(\mathcal{C}))\]
with kernel $\{\pm I\}$, where the image of $M\in \operatorname{O}(V)^{+}$ is defined according to the following commuting diagram
 \[\begin{xy}
  \xymatrix{
 \mathcal{H}(L_{2})  \ar[r]^{M}   & \mathcal{H}(L_{2})\ar[d]^{\Psi}  \\
    \mathbb{H}_{2}(\mathcal{C}) \ar[u]_{\Psi^{-1}} \ar[r]   & \mathbb{H}_{2}(\mathcal{C})\quad, }
\end{xy}\quad\begin{xy}
  \xymatrix{
 \psi^{-1}(Z)  \ar[r]   & M\langle\psi^{-1}(Z)\rangle\ar[d]  \\
    Z \ar[u] \ar[r]   & \Psi(M\langle\psi^{-1}(Z)\rangle) 
  }
\end{xy}\,.\]
The image of $\operatorname{SO}(L_{2})^{+}$ is generated by the following transformations
\[Z\mapsto Z+H\,,\,H\in \operatorname{Her}(2,\mathcal{O}),\quad\text{ and }\quad Z\mapsto -Z^{-1}\,.\]
We denote this group by $\Gamma_{2}$. The extension of $\Gamma_{2}$ by $Z\mapsto Z^{tr}$ is the image of $\operatorname{O}(L_{2})^{+}$.
\end{lemma}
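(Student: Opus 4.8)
The plan is to establish the statement in three parts: the homomorphism and its kernel; the fact that translations and $Z\mapsto -Z^{-1}$ lie in the image of $\operatorname{SO}(L_{2})^{+}$ and generate it; and the extension by $Z\mapsto Z^{tr}$. I would construct the homomorphism by transport of structure: $\operatorname{O}(V)^{+}$ acts on $\mathcal{H}(L_{2})$ by the biholomorphic maps $Z\mapsto M\langle Z\rangle$ recalled above, and $\Psi$ is a biholomorphism onto $\mathbb{H}_{2}(\mathcal{C})$, so conjugating the action by $\Psi$ gives a homomorphism into $\operatorname{Bih}(\mathbb{H}_{2}(\mathcal{C}))$, exactly as encoded in the displayed diagram. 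To identify the kernel I would pass to the projective model: $\mathbb{H}_{2}(\mathcal{C})\cong\mathcal{H}(L_{2})$ is biholomorphic to the component $\mathcal{D}$ of the quadric $\{(\mathcal{Z},\mathcal{Z})=0\}$ in $\mathbb{P}(V\otimes\C)$, and $\mathcal{D}$, being a nonempty open subset of that nondegenerate irreducible quadric, linearly spans $\mathbb{P}(V\otimes\C)$. Hence an element of $\operatorname{O}(V)^{+}$ acting trivially on $\mathbb{H}_{2}(\mathcal{C})$ fixes $\mathcal{D}$, and therefore all of $\mathbb{P}(V\otimes\C)$, pointwise; so it is a scalar in $\operatorname{GL}(V\otimes\C)$ and thus $\pm I$. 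Conversely $-I$ has determinant $1$, acts as the identity on $\mathbb{P}(V\otimes\C)$ (so it certainly preserves the chosen component and lies in $\operatorname{O}(V)^{+}$), whence the kernel is exactly $\{\pm I\}$.

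Next I would exhibit explicit preimages in $\operatorname{SO}(L_{2})^{+}$. Reading off the formula for $M\langle Z\rangle$ in the basis fixed by $\operatorname{Gram}(L_{2})$, the Eichler transformations attached to the isotropic vector of the cusp $\operatorname{Im}\tau\to\infty$ and in the $L$-direction act on $\mathcal{H}(L_{2})$ by $(\omega,z,\tau)\mapsto(\omega+a,\,z+b,\,\tau+d)$; transporting these through $\Psi$ and using the identity $\omega\tau-N(z)=\det\Psi(\omega,z,\tau)$ from \eqref{determinant-form identity} turns them into $Z\mapsto Z+H$, and the condition $gL_{2}\subseteq L_{2}$ forces precisely $H\in\operatorname{Her}(2,\mathcal{O})$, because $\mathcal{O}$ is $L=E_{8}$ as a lattice. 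Likewise a suitable order-two element $J_{0}\in\operatorname{SO}(L_{2})^{+}$ (the analogue of $S\in\operatorname{SL}(2,\Z)$) transports to $Z\mapsto -Z^{-1}$; one checks this from the explicit formula $-Z^{-1}=-\det(Z)^{-1}\cdot(\text{adjugate of }Z)$ for $2\times 2$ Hermitian $Z$, keeping track of the non-associativity of $\mathcal{C}$, and verifies that $J_{0}$ preserves $L_{2}$ with $\det J_{0}=1$ and lies in $\operatorname{O}(V)^{+}$. This gives $\Gamma_{2}\subseteq$ image of $\operatorname{SO}(L_{2})^{+}$.

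The reverse inclusion is the crux, and I would argue on the orthogonal side, where the action is linear. By Eichler's transitivity on primitive isotropic vectors (cf.\ \cite{GHS}) together with the unimodularity of $L_{2}$, any $M\in\operatorname{SO}(L_{2})^{+}$ can be brought, by repeatedly composing with translations $Z\mapsto Z+H$ and with $Z\mapsto -Z^{-1}$ — a higher-rank analogue of the reduction of a matrix in $\operatorname{SL}(2,\Z)$ to the identity by means of $T$ and $S$, with $Z\mapsto -Z^{-1}$ in the role of $S$ — into the stabilizer of the standard isotropic line, i.e.\ into a maximal parabolic subgroup. The unipotent radical of that parabolic is precisely the group of translations by $\operatorname{Her}(2,\mathcal{O})$, and its Levi part is governed by $\operatorname{O}(L_{1})^{+}$, for which the analogous description holds one rank lower (by descent, or by the known structure of the modular group on $\mathcal{H}(L_{1})$). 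I expect the main obstacle to be exactly this reduction: organizing the algorithm and proving its termination, and carrying it out entirely on $\operatorname{O}(V)$, since there is no honest matrix calculus available on $\mathbb{H}_{2}(\mathcal{C})$; alternatively the generation may simply be quoted from the earlier literature on the Cayley modular group.

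Finally, for the extension: $Z\mapsto Z^{tr}$ is the transport of the isometry $T_{0}$ of $L_{2}$ that fixes the two hyperbolic planes and acts by the Cayley involution $x\mapsto\overline{x}$ on the middle copy of $L$. One checks that $T_{0}$ preserves $L_{2}$ (the Coxeter order $\mathcal{O}$ is stable under $x\mapsto\overline{x}$, which one verifies on the given basis), fixes a positive-definite $2$-plane lying in $U\oplus U$ pointwise — hence preserves the chosen component and lies in $\operatorname{O}(V)^{+}$ — and has $\det T_{0}=-1$. By Proposition~1.1 the determinant is the only nontrivial character of $\operatorname{O}(L_{2})^{+}$, so $\operatorname{O}(L_{2})^{+}=\operatorname{SO}(L_{2})^{+}\cup T_{0}\operatorname{SO}(L_{2})^{+}$; passing to images and invoking the previous step shows that the image of $\operatorname{O}(L_{2})^{+}$ is $\Gamma_{2}\cup(Z\mapsto Z^{tr})\cdot\Gamma_{2}$, the extension of $\Gamma_{2}$ by the transposition, as claimed.
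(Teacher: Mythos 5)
Your construction of the homomorphism, the identification of the kernel, the explicit preimages (translations $T_{x}$ giving $Z\mapsto Z+H$ with $H\in\operatorname{Her}(2,\mathcal{O})$ via the identity $\omega\tau-N(z)=\det\Psi(\omega,z,\tau)$, an element $J$ giving $Z\mapsto -Z^{-1}$), and the treatment of $Z\mapsto Z^{tr}$ by the isometry acting through Cayley conjugation on the $E_{8}$-block all agree in substance with the paper, which exhibits exactly these elements ($T_{x}$, $J$, and a block matrix $R_{K}$ with $K\in\operatorname{O}(\mathcal{O})$, $\det R_{K}=-1$).

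The genuine gap is the step you yourself flag as the crux: that the transformations $Z\mapsto Z+H$ and $Z\mapsto -Z^{-1}$ generate the full image of $\operatorname{SO}(L_{2})^{+}$, i.e.\ that $T_{x}$ ($x\in L_{1}$) and $J$ generate $\operatorname{SO}(L_{2})^{+}$. Your proposed parabolic reduction is only a sketch, and as sketched it does not close: after using Eichler transitivity to land in the stabilizer of the standard isotropic line and splitting off the unipotent radical (the translations), the Levi part still contains the rotations coming from the finite group $\operatorname{O}(\mathcal{O})\cong\operatorname{O}(E_{8})$, i.e.\ the maps $Z\mapsto\overline{U}^{tr}ZU$; it is not at all obvious that these lie in the group generated by translations and $Z\mapsto -Z^{-1}$. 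In the paper this is precisely the non-trivial point addressed in the Remark following the Lemma, where the coincidence of $\Gamma_{2}$ with its extension by such rotations is quoted from \cite{D2}, pp.~57--63, so "descent to one rank lower" does not dispose of it. The paper avoids your reduction altogether by citing \cite{GHS}, Theorem 3.4, which directly asserts that $\operatorname{SO}(L_{2})^{+}$ is generated by the Eichler transvections $T_{x}$, $x\in L_{1}$, together with $J$; transporting these generators through $\Psi$ then finishes the argument in one line. So either carry out and complete your reduction (including the Levi/finite-orthogonal-group step and termination), or do what the paper does and invoke the generation theorem from the literature; as written, the central inclusion "image of $\operatorname{SO}(L_{2})^{+}\subseteq\Gamma_{2}$" is not proved.
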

\begin{proof}
The first statement is clear. We determine the image of $\operatorname{SO}(L_{2})^{+}$. We first note that $\mathcal{O}$ equipped with the quadratic form $N(\cdot)$ is isomorphic to $L=E_{8}$. We fix the Coxeter basis of $\mathcal{O}$. From \cite{GHS}, Theorem 3.4, we obtain the following generators of $\operatorname{SO}(L_{2})^{+}$ 
\[
\begin{aligned}
&T_{x}=\begin{pmatrix}1& -x^{tr}S_{1}&-(x,x)\\0& I_{10}&x\\0&0&1\end{pmatrix}&&\,,\,x=(n,t,m)^{tr}\in L_{1}\text{ and }t\in \mathcal{O}\,, \\\\
   &J=\begin{pmatrix}
      0&0&0&0&-1\\
      0&0&0&-1&0\\
      0&0&I_{8}&0&0\\
      0&-1&0&0&0\\
      -1&0&0&0&0\\
     \end{pmatrix}&&\,.&
  \end{aligned}\]
If we put
\[H:=\Psi(x)=\begin{pmatrix}
              n&t\\\overline{t}&m
             \end{pmatrix}\in\operatorname{Her}(2,\mathcal{O})\] 
then one obtains the transformation $Z\mapsto Z+H$ as the image of $T_{x}$. Moreover (\ref{determinant-form identity}) shows that $Z\mapsto -Z^{-1}$ is the image of $J$. The matrix
\[K:=\begin{pmatrix}
  1 & 0 &  0 & 0 & 0 &-1 &-1& -1\\
 0 &-1&  0 & 0&  0&  0&  0 & 0\\
 0&  0& -1 & 0 & 0 & 0 & 0 & 0\\
 0 & 0 & 0 & -1&  0 & 0 & 0 & 0\\
 0 & 0 & 0 & 0& -1 & 0 & 0 & 0\\
 0 & 0 & 0 & 0 & 0 &-1 & 0 & 0\\
 0 & 0 & 0 & 0 & 0 & 0& -1&  0\\
 0 & 0 & 0 & 0 & 0 & 0 & 0 &-1
  \end{pmatrix}
\]
belongs to the finite orthogonal group $\operatorname{O}(\mathcal{O})$ and 
\[R_{K}=\begin{pmatrix}
         I_{2}&0&0\\0&K&0\\0&0&I_{2}
        \end{pmatrix}\]
is an element of $\operatorname{O}(L_{2})^{+}$ with determinant $-1$, whose image is $Z\mapsto Z^{tr}$. 
\end{proof}
\begin{rem}
 In \cite{EK2} the extension of $\Gamma_{2}$ by  
\[Z\mapsto\overline{U}^{tr}ZU\,,\,U\in \left\{\begin{pmatrix}
                                                                                 0&1\\-1&0
                                                                                \end{pmatrix},\begin{pmatrix}
                                                                                 1&u\\0&1
                                                                                \end{pmatrix}\,,\,u\in\mathcal{O}\right\}\]
is considered instead. In \cite{D2}, pp. 57-63,  the author shows that this group coincides with $\Gamma_{2}$. 
\end{rem}
The affine cone associated to $\mathcal{D}$ is defined as
\[\mathcal{D}^{\bullet}=\{\mathcal{Z}\in V\otimes\C\,|\,[\mathcal{Z}]\in \mathcal{D}\}\,.\]
We introduce the notion of a modular form.
\begin{definition}\label{def:Modular form}
 Let $\Gamma$ be a subgroup of $\operatorname{O}(L_{2})^{+}$ of finite index. A \textit{modular form}\index{modular form} of weight $k\in \Z$ and character $\chi\,:\,\Gamma\to\C^{\times}$ with respect to $\Gamma$ is a holomorphic function $f\,:\,\mathcal{D}^{\bullet}\to\C$ such that 
\[\begin{aligned}
   &f(t\mathcal{Z})=t^{-k}f(\mathcal{Z})\quad\text{ for all }t\in\C^{\times}\,,&\\
   &f(g\mathcal{Z})=\chi(g)f(\mathcal{Z})\quad\text{ for all }g\in\Gamma\,.&
  \end{aligned}
\]
A modular form is called a \textit{cusp form}\index{modular form!cusp form}, if it vanishes at every cusp. The space of modular forms of weight $k$ and character $\chi$ for the group $\Gamma$ will be denoted by $\mathcal{M}_{k}(\Gamma,\chi)$\index{$\mathcal{M}_{k}(\Gamma,\chi)$}. For the subspace of cusp forms we will write $\mathcal{S}_{k}(\Gamma,\chi)$\index{$\mathcal{S}_{k}(\Gamma,\chi)$}.  
\end{definition}
In view of Lemma \ref{lemma:biholomorphic correspondence of modular groups} we can give the corresponding transformation behaviour of a modular form on the domain $\mathbb{H}_{2}(\mathcal{C})$. A holomorphic function $f:\mathbb{H}_{2}(\mathcal{C})\to\C$ is a modular form of weight $k$ with respect to $\Gamma_{2}$ if the following conditions are satisfied:
\begin{enumerate}[(i)]
 \item $f(Z+H)=f(Z)$ for all $H\in\operatorname{Her}(2,\mathcal{O}),Z\in\mathbb{H}_{2}(\mathcal{C})\,,$
 \item $f(-Z^{-1})=\det(Z)^{k}f(Z)$ for all $Z\in\mathbb{H}_{2}(\mathcal{C})\,.$
\end{enumerate}
A modular form $f:\mathbb{H}_{2}(\mathcal{C})\to\C$ is called \textit{symmetric} if $f(Z^{tr})=f(Z)$ and \textit{skew-symmetric} if $f(Z^{tr})=-f(Z)$.

\section{Constructions of modular forms with trivial character}
In \cite{EK} and \cite{EK2} the authors investigate the  Maaß space for the Cayley numbers. Moreover the authors consider Jacobi Eisenstein series and derive an explicit Fourier expansion. In this section we keep up these constructions and describe the Fourier expansion of the Maaß lift. 
Let \(f\) be a modular form of even weight \(k\) with respect to \(\Gamma_2\) with Fourier expansion
\[f(Z)=\sum_{\begin{smallmatrix} 
T\in \operatorname{Her}(2,\mathcal{O}),T\geq 0 \end{smallmatrix}} 
          \alpha \left( T \right)  e^{2 \pi i \mathcal{T} \left( Z,T \right)}\,,\]
where $T=\left(\begin{smallmatrix}
         n & t\\\overline{t} & m
         \end{smallmatrix}\right)\geq 0$ means $n,m\in\N_{0}$ and $\det(T)\geq 0$, and we define 
\[ \varepsilon(T)=\textnormal{max} \left\{ l \in \N; \ \tfrac{1}{l} T \in \operatorname{Her}(2,\mathcal{O}) \right\}\quad\text{ for }T\neq 0\,.\]
 We say that $f$ satisfies the \textit{Maa{\ss} condition} if for all \(0\neq T \in \operatorname{Her}(2,\mathcal{O})\), \(T \geq 0\)
\[\alpha(T) = \sum_{0 < d \mid \varepsilon(T)} d^{k-1} \alpha^{\ast}((\det T)/d^2)) , 
\quad \alpha^{\ast}(n) := \alpha\left(  \left( \begin{smallmatrix} n & 0 \\ 0 & 1 \end{smallmatrix} \right) \right)\,.\]
The space of all such $f$ is called the $\textit{Maa{\ss} space}$ and is denoted by $\mathcal{M}(k,\mathcal{O})$. 
 From \cite{Ei} and \cite{G} we get the existence of an isomorphism between the Maa{\ss} space and the space of elliptic modular forms of weight $k-4$ with respect to $\sl$
 \begin{equation}\label{isom Maas space Jacobi forms}
  \mathcal{M}(k,\mathcal{O}) \rightarrow \mathcal{M}_{k-4}(\sl)\quad, \quad f(Z)\mapsto f^{\ast}(\omega)=\sum_{n=0}^{\infty}{\alpha^{\ast}(n)e^{2\pi i n\omega}}\,.
 \end{equation}
 whenever $k\geq 4$. In the next Theorem we determine the preimages of the normalized elliptic Eisenstein series $g_{k}$, where $g_{0}:=1$. 
\begin{theorem}\label{theorem:Fourier expansion of Maass lifts of JE series}
 For any even $k\geq 10$ the function given by
\[\begin{aligned}
   E_{k}(Z) =& - \tfrac{B_k}{2k} + \sum_{\begin{smallmatrix} T=\left( \begin{smallmatrix} n & t \\ \overline{t} & m \end{smallmatrix} \right) \geq 0 \\ \textnormal{rank}(T)=1 \end{smallmatrix}} 
              \sum_{0<a\mid \varepsilon(T)}a^{k-1} \ e^{2 \pi i \mathcal{T}(T,Z)}& \\
         & - \tfrac{2(k-4)}{B_{k-4}} \sum_{T=\left( \begin{smallmatrix} n & t \\ \overline{t} & m \end{smallmatrix} \right) > 0}
              \sum_{\begin{smallmatrix} 0 < a \mid \varepsilon(T) \end{smallmatrix}} a^{k-1} \ \sigma_{k-5}\left(\tfrac{nm-N(t)}{a^2}\right) \ e^{2 \pi i \mathcal{T}(T,Z)}\quad,\quad Z\in\mathbb{H}_{2}(\mathcal{C})\,,&
  \end{aligned}\]
 defines a symmetric modular form of weight $k$ on $\mathbb{H}_{2}(\mathcal{C})$. If $k=4$ let 
\[ E_{4}(Z) =  \tfrac{1}{240} + \sum_{\begin{smallmatrix} T=\left( \begin{smallmatrix} n & t \\ \overline{t} & m \end{smallmatrix} \right) \geq 0 \\ \textnormal{rank}(T)=1 \end{smallmatrix}} 
              \sum_{0<a\mid \varepsilon(T)}a^{3} \ e^{2 \pi i \mathcal{T}(T,Z)}\,.\]
\end{theorem}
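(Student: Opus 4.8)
The plan is to identify the function $E_{k}$ with the Maa{\ss} lift of the normalized elliptic Eisenstein series $g_{k-4}$ and to read off its Fourier expansion from that description. Since the map $f\mapsto f^{\ast}$ in (\ref{isom Maas space Jacobi forms}) is an isomorphism $\mathcal{M}(k,\mathcal{O})\to\mathcal{M}_{k-4}(\sl)$ for $k\geq 4$, and since $g_{k-4}$ lies in $\mathcal{M}_{k-4}(\sl)$ (here $k-4\geq 6$ for even $k\geq 10$, while $g_{0}=1$ for $k=4$), there is a unique $f\in\mathcal{M}(k,\mathcal{O})$ with $f^{\ast}=g_{k-4}$. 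As an element of $\mathcal{M}(k,\mathcal{O})\subseteq\mathcal{M}_{k}(\Gamma_{2})$ this $f$ is automatically holomorphic on $\mathbb{H}_{2}(\mathcal{C})$ and modular of weight $k$, so it remains to compute its Fourier coefficients $\alpha(T)$ and to match them with the expansion in the statement.

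First I would record the Fourier coefficients of $g_{k-4}$. Normalized so that its constant term is $1$, one has $g_{k-4}=1-\tfrac{2(k-4)}{B_{k-4}}\sum_{n\geq 1}\sigma_{k-5}(n)q^{n}$ for even $k\geq 10$, so $\alpha^{\ast}(0)=1$ and $\alpha^{\ast}(n)=-\tfrac{2(k-4)}{B_{k-4}}\sigma_{k-5}(n)$ for $n\geq 1$ (for $k=4$ simply $\alpha^{\ast}(0)=1$, $\alpha^{\ast}(n)=0$). Substituting this into the Maa{\ss} condition
\[\alpha(T)=\sum_{0<d\mid\varepsilon(T)}d^{k-1}\,\alpha^{\ast}\!\big((\det T)/d^{2}\big),\qquad 0\neq T\geq 0,\]
and splitting according to the rank of $T$ reproduces the two $T$-sums in the statement: for $\operatorname{rank}(T)=1$ one has $\det T=0$, so only $\alpha^{\ast}(0)=1$ contributes and $\alpha(T)=\sum_{0<d\mid\varepsilon(T)}d^{k-1}$; for $\operatorname{rank}(T)=2$ one has $\det T=nm-N(t)>0$, the constant $-\tfrac{2(k-4)}{B_{k-4}}$ factors out of the sum, and one uses that $d^{2}\mid\det T$ whenever $d\mid\varepsilon(T)$. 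For $k=4$ the rank-two sum is empty.

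It remains to pin down the constant term $\alpha(0)$, which is the single coefficient not governed by the Maa{\ss} condition. For this I would use the Fourier--Jacobi expansion $f(Z)=\sum_{n\geq 0}\phi_{n}(\tau,z)\,e^{2\pi i n\omega}$ at the cusp belonging to $\omega$: the coefficient $\phi_{0}$ is a Jacobi form of weight $k$ and index $0$, hence (the elliptic variable drops out) an ordinary modular form of weight $k$ on $\sl$. Only the $T=\left(\begin{smallmatrix}0&0\\0&m\end{smallmatrix}\right)$ contribute to it, and since $\varepsilon\!\left(\begin{smallmatrix}0&0\\0&m\end{smallmatrix}\right)=m$ one gets $\phi_{0}(\tau)=\alpha(0)+\sum_{m\geq 1}\sigma_{k-1}(m)q^{m}$. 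A modular form of positive weight on $\sl$ is determined by its Fourier coefficients of positive index, so $\phi_{0}=-\tfrac{B_{k}}{2k}\,g_{k}$ and hence $\alpha(0)=-\tfrac{B_{k}}{2k}$; the same computation with $k=4$ gives $\alpha(0)=\tfrac{1}{240}=-\tfrac{B_{4}}{8}$. Finally, symmetry $E_{k}(Z^{tr})=E_{k}(Z)$ follows because $\alpha(T)$ depends on $T$ only through $\det T$ and $\varepsilon(T)$, both invariant under $T\mapsto T^{tr}$, together with $\mathcal{T}(Z^{tr},T)=\mathcal{T}(Z,T^{tr})$.

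I expect the computation of $\alpha(0)$ to be the only delicate step; if the explicit form of the Maa{\ss} lift in \cite{Ei} and \cite{G} already includes the $T=0$ term, this becomes a pure substitution, but the Fourier--Jacobi argument keeps the proof self-contained. Everything else reduces to bookkeeping: specializing the Maa{\ss} condition to $g_{k-4}$ and checking that it yields precisely the stated double sums.
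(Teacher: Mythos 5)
Your proposal is correct and follows essentially the route the paper delegates to its citation of \cite{Ei}: identify $E_{k}$ as the preimage of $g_{k-4}$ under the Maa{\ss}-lift isomorphism (\ref{isom Maas space Jacobi forms}), read off the coefficients from the Maa{\ss} condition, and fix the constant term (your Fourier--Jacobi argument for $\alpha(0)=-\tfrac{B_k}{2k}$ is a sound way to make that step self-contained). The paper's own proof is just this reference, so your write-up is a fleshed-out version of the same argument rather than a different approach.
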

\begin{proof}
 Compare \cite{Ei}.
\end{proof}
In the case $k=4$ we have another description due to \cite{EK2} as a singular modular form
\[240 \,E_{4}(Z)=\sum_{g\in \mathcal{O}^{2}}{e^{2\pi i\mathcal{T}(Z,g\overline{g}^{tr})}}\,.\]
We investigate the restrictions of the $E_{k}$ to the Hermitian half-space.  For this purpose we consider $\mathcal{O}$ with respect to the standard basis of $\mathcal{C}$ and write $t\in\mathcal{O}$ as
\[t=\sum_{j=0}^{7}{t_{j}e_{j}}\quad,\quad t_{j}\in\Z/2\quad,\quad t^{\ast}:=t_{0}e_{0}+t_{1}e_{1}\,.\]
 The lattice
\[\mathfrak{0}^{\sharp}:=\{t_{0}e_{0}+t_{1}e_{1}\,|\,t_{0},t_{1}\in\Z/2\}\subset\mathcal{C}\]
is dual to the ring of integers $\mathfrak{0}$ for the Gaussian number field $\Q(\sqrt{-1})$ where $\sqrt{-1}:=e_{1}$. We consider the embedded Hermitian half-space
\[\mathbb{H}_{2}(\C)=\left\{\left(\begin{smallmatrix}
                             \omega & z \\ \ast & \tau\end{smallmatrix}\right)\in \mathbb{H}_{2}(\mathcal{C});z=z_{0}e_{0}+z_{1}e_{1}
                            \right\}\subseteq \mathbb{H}_{2}(\mathcal{C})\,.\] The Hermitian Maa{\ss} space $\mathcal{M}(k,\mathfrak{o})$ with respect to $\mathfrak{o}$ and $k\in\N_{0}$ was first considered in \cite{Ko}. 
\begin{lemma}\label{lemma:Restriction of Maass space}
 Let $k\geq 4$ be even. The map
\[\mathcal{M}(k,\mathcal{O})\to\mathcal{M}(k,\mathfrak{o})\,,\,f\mapsto f|_{\mathbb{H}_{2}(\C)}\]
 is a homomorphism of vector spaces. The Fourier coefficients of the restrictions are given by
\[\beta \left( \left( \begin{smallmatrix} n & s \\ \overline{s} & m \end{smallmatrix} \right) \right)=\sum_{t\in\mathcal{O},t^{\ast}=s}{\alpha \left( \left( \begin{smallmatrix} n & t \\ \overline{t} & m \end{smallmatrix} \right) \right)}\quad,\quad s\in \mathfrak{o}^{\sharp}\,.\]
\end{lemma}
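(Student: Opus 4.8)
The linearity of $f\mapsto f|_{\mathbb{H}_2(\C)}$ is immediate, so the substance of the statement is the coefficient formula together with the assertion that the restriction still lies in $\mathcal{M}(k,\mathfrak{o})$. I would deal with the coefficient formula first. For $Z=\bigl(\begin{smallmatrix}\omega&z\\\overline z&\tau\end{smallmatrix}\bigr)\in\mathbb{H}_2(\C)$, so $z=z_0e_0+z_1e_1$, and $T=\bigl(\begin{smallmatrix}n&t\\\overline t&m\end{smallmatrix}\bigr)\in\operatorname{Her}(2,\mathcal{O})$ one computes $\mathcal{T}(Z,T)=n\omega+m\tau+P(z,t)$, where $P$ is a bilinear pairing of $z$ and $t$. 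Splitting $t=t^{\ast}+t^{\perp}$ with $t^{\ast}=t_0e_0+t_1e_1$ and $t^{\perp}=t_2e_2+\dots+t_7e_7$, the contribution $P(z,t^{\perp})$ vanishes whenever $z\in\C e_0+\C e_1$, since it reduces to a multiple of $e_1t^{\perp}+t^{\perp}e_1$ and $e_1$ anticommutes with each of $e_2,\dots,e_7$. Hence $\mathcal{T}(Z,T)$ depends on $T$ only through $n$, $m$ and $t^{\ast}$; inserting the Fourier expansion of $f$ and collecting the terms with a fixed value of $(n,t^{\ast},m)$ yields $\beta\bigl(\bigl(\begin{smallmatrix}n&s\\\overline s&m\end{smallmatrix}\bigr)\bigr)=\sum_{t\in\mathcal{O},\,t^{\ast}=s}\alpha\bigl(\bigl(\begin{smallmatrix}n&t\\\overline t&m\end{smallmatrix}\bigr)\bigr)$. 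The inner sum is finite since $\alpha(T)=0$ unless $T\geq 0$, which forces $N(t)\leq nm$, and $\mathcal{O}$ is a positive definite lattice; and since $\mathfrak{o}=\Z e_0+\Z e_1$ is a primitive sublattice of the unimodular lattice $\mathcal{O}$, its orthogonal projection maps $\mathcal{O}$ onto $\mathfrak{o}^{\sharp}$, so $s$ indeed ranges over $\mathfrak{o}^{\sharp}$.

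Next I would check that $g:=f|_{\mathbb{H}_2(\C)}$ satisfies the transformation laws of a Hermitian modular form of weight $k$. The embedded half-space $\mathbb{H}_2(\C)\subseteq\mathbb{H}_2(\mathcal{C})$ is stable under the degree-two Hermitian modular group of $\Q(\sqrt{-1})$, whose generators embed into $\Gamma_2$: the translations $Z\mapsto Z+H$ with $H\in\operatorname{Her}(2,\mathfrak{o})$ occur among those with $H\in\operatorname{Her}(2,\mathcal{O})$ because $\mathfrak{o}\subseteq\mathcal{O}$; the inversion $Z\mapsto -Z^{-1}$ is a generator of $\Gamma_2$ by Lemma \ref{lemma:biholomorphic correspondence of modular groups} and it preserves both $\mathbb{H}_2(\C)$ and the value $\det Z$; and the unit rotations $Z\mapsto\overline U^{tr}ZU$ lie in $\Gamma_2$ by the Remark following that Lemma. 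Hence conditions (i) and (ii) for $f$ on $\mathbb{H}_2(\mathcal{C})$ restrict to the corresponding Hermitian laws for $g$, and holomorphy at the cusps is automatic from the shape of the Fourier expansion (Koecher principle). So $g$ is a Hermitian modular form of weight $k$.

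It remains to verify the Maa\ss{} condition defining $\mathcal{M}(k,\mathfrak{o})$ (see \cite{Ko}), and this is where the real work lies. I would pass through the isomorphism (\ref{isom Maas space Jacobi forms}): the Cayley Maa\ss{} condition expresses each $\alpha(T)$ in terms of the Fourier coefficients $\alpha^{\ast}$ of $f^{\ast}\in\mathcal{M}_{k-4}(\sl)$. Let $\Lambda:=\mathfrak{o}^{\perp}\cap\mathcal{O}$ be the orthogonal complement lattice, so that $\mathcal{O}$ is a gluing of $\mathfrak{o}\oplus\Lambda$; for $s\in\mathfrak{o}^{\sharp}$ and $t\in\mathcal{O}$ with $t^{\ast}=s$ the part $t^{\perp}:=t-s$ runs through a fixed coset of $\Lambda$ prescribed by the class of $s$ in $\mathfrak{o}^{\sharp}/\mathfrak{o}$, and $N(t)=N(s)+N(t^{\perp})$. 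After substituting $t\mapsto t/d$ in the divisor sums of the Cayley Maa\ss{} condition, the relation $\beta(T_s)=\sum_{t^{\ast}=s}\alpha(T_t)$ rearranges so that the elliptic datum attached to $g$ is $f^{\ast}$ multiplied by the vector of theta series of the cosets of $\Lambda\cong D_6$, the coset being matched to $\mathfrak{o}^{\sharp}/\mathfrak{o}$ through the gluing. This product is again an admissible datum (of weight $(k-4)+3=k-1$, transforming under the Weil representation attached to $\mathfrak{o}$, since the discriminant form of $\Lambda\cong D_6$ is the negative of that of $\mathfrak{o}$), hence $g$ lies in the image of the Hermitian Maa\ss{} lift, i.e.\ $g\in\mathcal{M}(k,\mathfrak{o})$. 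The \emph{main obstacle} is precisely this rearrangement: one must pin down $\Lambda$ and the gluing and reconcile them with the explicit shape of the Maa\ss{} condition of \cite{Ko}. It is essential here that for $s\notin\mathfrak{o}$ the value $\beta(T_s)$ is genuinely not described by the ``naive'' determinant relation — indeed $\det T_s\notin\Z$ in that case — but is governed by a nontrivial coset-theta; by comparison the geometric step (dependence on $t^{\ast}$ only) and the arithmetic step (the Cayley Maa\ss{} condition) are routine.
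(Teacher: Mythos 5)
Your proposal is in substance the paper's own argument, only spelled out: the paper disposes of the coefficient formula and the modularity of the restriction as ``straightforward'' and justifies membership in the Maa{\ss} space in one sentence — Jacobi forms of index $1$ restrict to Jacobi forms of index $1$ — which is exactly the theta--decomposition mechanism you describe with $\Lambda=\mathfrak{o}^{\perp}\cap\mathcal{O}\cong D_{6}$. The step you flag as the ``main obstacle'' does go through, and you can close it directly on the level of Fourier coefficients instead of appealing abstractly to the image of the Hermitian lift: substituting $t=dt'$ in the Cayley Maa{\ss} condition and using your coefficient formula gives
\[\beta\left(\left(\begin{smallmatrix} n & s \\ \overline{s} & m \end{smallmatrix}\right)\right)=\sum_{\substack{d\mid\gcd(n,m)\\ s/d\in\mathfrak{o}^{\sharp}}} d^{k-1}\sum_{\substack{t'\in\mathcal{O}\\ (t')^{\ast}=s/d}}\alpha^{\ast}\!\left(\tfrac{nm-N(s)}{d^{2}}-N\bigl(t'-(t')^{\ast}\bigr)\right),\]
where the outer condition is precisely $d\mid\varepsilon$ of the Hermitian matrix (using, as you note, that the projection maps $\mathcal{O}$ onto $\mathfrak{o}^{\sharp}$), and the inner sum runs over the $D_{6}$-coset glued to $s/d\bmod\mathfrak{o}$. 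Taking $m=1$, $d=1$ identifies the inner sum with the coefficient $\beta$ of a matrix with lower right entry $1$ and the same determinant, so the only genuine point is that the inner sum depends on $s/d$ only through $\det/d^{2}$: the cosets $0$ and $\tfrac{e_{0}+e_{1}}{2}$ are already distinguished by $\det/d^{2}\bmod 1$, and for the remaining pair note that left multiplication by the unit $e_{1}$ is a norm-preserving bijection of $\mathcal{O}$ with $(e_{1}t)^{\ast}=e_{1}t^{\ast}$, so the sums for $s/d=\tfrac{e_{0}}{2}$ and $s/d=\tfrac{e_{1}}{2}$ coincide (equivalently, the two spinor cosets of $D_{6}$ have equal theta series). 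With that observation your rearrangement yields Kojima's Maa{\ss} condition verbatim, so there is no gap in the approach; the remaining parts of your write-up (linearity, the computation showing $\mathcal{T}(Z,T)$ depends only on $n,m,t^{\ast}$, and invariance under the embedded Hermitian modular group) are correct and correspond to what the paper calls straightforward.
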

\begin{proof}
 The resctriction of $f\in\mathcal{M}(k,\mathcal{O})$ satisfies the Maa{\ss} condition for Hermitian modular forms, because Jacobi forms of index 1 are mapped to Jacobi forms of index 1. Hence the map is  well-defined. The remaining part is straight forward.
\end{proof}
For the Maa{\ss} space one only needs the coefficients
\[\beta \left( \left( \begin{smallmatrix} n & s \\ \overline{s} & 1 \end{smallmatrix} \right) \right)=\sum_{r=0}^{n}{\gamma(n-r,s)\alpha^{\ast}(n-r)}\quad,\quad s=0,\frac{e_{0}}{2},\frac{e_{1}}{2},\frac{e_{0}+e_{1}}{2}\]
where 
\[\gamma(n,s):=\sharp\{t\in \mathcal{O}\,|\,t^{\ast}=s\,,\,N(t)=n\}\,.\] 

\section{The graded ring of modular forms}
In this section we construct generators for the ring of modular forms. The starting point of our considerations is a result by Shiga. In this section let  $\Gamma=\operatorname{O}(L_{2})^{+}$. We define the associated modular variety $M:=\mathcal{D}/\Gamma$  and denote by $M^{\ast}$ the Satake-Baily-Borel compactification of $M$. By $\mathbb{P}(\Omega)$ we denote the weighted projective space with weight
\[\Omega=\{4,10,12,16,18,22,24,28,30,36,42\}\,.\]
The following result can be found in \cite{HU}. It was originally published in \cite{L} (cf. also \cite{Shi}). 
\begin{theorem}\label{theorem:Shiga}
 There is an isomorphism $\mathbb{P}(\Omega)\xrightarrow[]{\sim}M^{\ast}$ of algebraic varieties.
\end{theorem}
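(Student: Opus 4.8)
The statement is equivalent to the claim that the graded $\mathbb{C}$-algebra $A_{\bullet}=\bigoplus_{k\ge 0}\mathcal{M}_{k}(\Gamma)$ of symmetric modular forms — the character group of $\Gamma=\operatorname{O}(L_{2})^{+}$ being of order two, these are exactly the forms with trivial character — is a free polynomial algebra on eleven homogeneous generators of weights $4,10,12,16,18,22,24,28,30,36,42$; for then $M^{\ast}=\operatorname{Proj}A_{\bullet}$, the Baily--Borel compactification, is the weighted projective space $\mathbb{P}(\Omega)$. I would split the argument into three parts: (a) exhibit modular forms $\phi_{k}$, $k\in\Omega$, of the prescribed weights; (b) show they are algebraically independent; (c) show they generate $A_{\bullet}$. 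Parts (a) and (b) are in principle explicit; part (c) is the heart of the matter.

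For (a) one takes, for each weight $k\in\Omega$, the Maa{\ss} lift $E_{k}$ of the normalized elliptic Eisenstein series from Theorem \ref{theorem:Fourier expansion of Maass lifts of JE series} (for $k=4$ the singular form); these are symmetric and have Fourier coefficients in closed form, which is precisely what makes an Eisenstein choice of generators feasible. For (b) I would establish algebraic independence by locating a point of $\mathcal{D}^{\bullet}$ at which the differentials $d\phi_{k}$, $k\in\Omega$, are linearly independent, so that the map $\mathcal{D}^{\bullet}\to\mathbb{C}^{11}$, $\mathcal{Z}\mapsto(\phi_{k}(\mathcal{Z}))_{k\in\Omega}$, is dominant; the needed eleven independent derivatives can be read off from the Fourier--Jacobi expansions of the $E_{k}$ together with the restrictions to the Hermitian half-space $\mathbb{H}_{2}(\mathbb{C})$ (Lemma \ref{lemma:Restriction of Maass space}) and to the diagonal upper half-planes, which reduce the computation to classical elliptic Eisenstein series. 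Equivalently, one checks by a finite Fourier-coefficient computation — with the number of coefficients bounded a priori in terms of the weights — that no nonzero weighted-homogeneous polynomial in the $\phi_{k}$ vanishes identically.

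The decisive step (c) requires the Hilbert series $P(t)=\sum_{k\ge 0}(\dim\mathcal{M}_{k}(\Gamma))\,t^{k}$ to be known in advance. One route computes it by Hirzebruch--Mumford proportionality and Riemann--Roch on a toroidal resolution $\widetilde{M}\to M^{\ast}$: since $L_{2}$ is unimodular there is a single zero-dimensional cusp, and the remaining input is the automorphy (Hodge) line bundle together with the reflective divisors on $M$ fixed by elements of $\Gamma$, whose discrepancy contributions must be tabulated; one then verifies $P(t)=\prod_{k\in\Omega}(1-t^{k})^{-1}$. Combined with the injection $\mathbb{C}[\phi_{k}:k\in\Omega]\hookrightarrow A_{\bullet}$ from (b), whose source is a polynomial ring with precisely this Hilbert series, a dimension count in each degree forces equality, so the $\phi_{k}$ generate freely and $M^{\ast}\cong\mathbb{P}(\Omega)$. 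The alternative, and historically original, route (\cite{L}, cf. \cite{Shi}, \cite{HU}) is geometric: realize $M^{\ast}$ as a known moduli space, show it is $\mathbb{Q}$-factorial of Picard number one with ample Hodge class, and read the weighted-projective structure off the singularity data at the cusp and along the reflective locus. In either case the main obstacle is the exact evaluation of $P(t)$ — controlling the boundary and ramification corrections to the leading proportionality term — after which the conclusion is formal.
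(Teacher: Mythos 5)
The paper does not actually prove this statement: Theorem \ref{theorem:Shiga} is quoted from \cite{HU} and originally \cite{L} (cf.\ \cite{Shi}), so there is no internal argument to compare yours with. Judged on its own, your proposal is a programme rather than a proof, and the gap sits exactly at your step (c), which you yourself identify as the heart of the matter. You reduce everything to knowing the exact Hilbert series $P(t)=\prod_{k\in\Omega}(1-t^{k})^{-1}$ and propose to obtain it ``by Hirzebruch--Mumford proportionality and Riemann--Roch on a toroidal resolution, tabulating the discrepancy contributions''; none of this is carried out, and for a ten-dimensional orthogonal modular variety an \emph{exact} dimension formula in every weight (not just the leading proportionality term) requires control of higher cohomology, of the elliptic fixed loci of $\Gamma$, and of all boundary contributions -- precisely the computation that the published proofs (\cite{L}, \cite{Shi}, \cite{HU}) avoid by identifying $M^{\ast}$ geometrically through a period map. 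Step (b) is likewise only asserted: that eleven independent differentials ``can be read off'' from Fourier--Jacobi expansions and restrictions is a nontrivial claim -- indeed the present paper needs the restriction to the Hermitian half-space and explicit coefficient tables just to separate the candidate generators in weight $42$.

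There is also a logical slip at the outset: the theorem is \emph{not} equivalent to the statement that $\bigoplus_{k}\mathcal{M}_{k}(\Gamma,1)$ is a free polynomial algebra on generators of weights $\Omega$. The direction you use (free polynomial ring with these weights $\Rightarrow$ $M^{\ast}\cong\mathbb{P}(\Omega)$) is fine, but the converse fails in general, since non-isomorphic graded rings can have isomorphic $\operatorname{Proj}$; this is exactly why the paper, in passing from Theorem \ref{theorem:Shiga} to Corollary \ref{cor:abstract ring of symmetric mf}, has to invoke the finer result of \cite{HU} that the isomorphism respects the orbifold structures and identifies the $O(1)$-bundles. Note finally that, within the architecture of this paper, your parts (a)--(c) instantiated with the $E_{k}$ amount to Theorem \ref{theorem:symmetric generators for graded ring}, whose proof \emph{uses} Theorem \ref{theorem:Shiga} (through the dimension table coming from Corollary \ref{cor:abstract ring of symmetric mf}) as input; so to avoid circularity your proposal must produce those dimensions independently, which is again the uncarried-out step (c).
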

The graded ring of modular forms with respect to $\Gamma_{2}$ is denoted by $\mathcal{A}$. 
In Corollary 1.3 in \cite{HU} the authors conclude that there exist modular forms $f_k$, $k\in\Omega$ and $k=252$, of weight $k$, which generate $\mathcal{A}$. Observe that $\mathcal{A}$ contains the ring of symmetric modular forms $\mathcal{A}^{\textit{sym}}$ as a subring. 
In Corollary 4.4 in \cite{HU} Hashimoto and Ueda prove an isomorphism of the orbifolds, and identify the line bundles $O(1)$ on them. $\mathcal{A}^{\textit{sym}}$ is just the graded ring obtained as the direct sum over the set of multiples of $O(1)$ of the spaces of sections on $\mathbb{M}\,\backslash\, S_{\mathbb{M}}$, which is the polynomial ring on $\mathbb{T}\,\backslash\, S_{\mathbb{T}}$ side. Thus we have

\begin{cor}\label{cor:abstract ring of symmetric mf}
The ring $\mathcal{A}^{\textit{sym}}$ is a polynomial ring in $11$ variables. More precisely there exist modular forms $f_{k}\in\mathcal{M}_{k}(\Gamma,1)$, $k\in\Omega$, which are algebraically independent such that
 \[\mathcal{A}^{\textit{sym}}\cong\C[f_{4},\dots,f_{42}]\,.\]
\end{cor}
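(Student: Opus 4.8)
The plan is to upgrade the isomorphism of \emph{varieties} in Theorem~\ref{theorem:Shiga} to an isomorphism of \emph{graded} rings, the extra input being the orbifold-level statement of \cite{HU}, Corollary~4.4, which matches the tautological line bundles on the two sides. Throughout $\Gamma=\operatorname{O}(L_{2})^{+}$, so $\mathcal{A}^{\textit{sym}}=\bigoplus_{m\geq 0}\mathcal{M}_{m}(\Gamma,1)$. I would first note that this ring sits in even degrees only: $-I\in\Gamma$ acts on $\mathcal{D}^{\bullet}$ by $\mathcal{Z}\mapsto-\mathcal{Z}$ while acting trivially on $\mathbb{H}_{2}(\mathcal{C})$, so a nonzero form of weight $m$ and trivial character forces $(-1)^{m}=1$. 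This is consistent with the target $\C[x_{4},x_{10},\dots,x_{42}]$, which also lives in even degrees.

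The second step is to realise $\mathcal{A}^{\textit{sym}}$ as a ring of sections. Writing $\mathcal{L}$ for the Hodge line bundle (so that weight-$m$ forms are sections of $\mathcal{L}^{\otimes m}$ on the smooth locus of $M^{\ast}$ away from the boundary), the Koecher principle — applicable since $L_{2}$ has signature $(2,10)$ and the Satake boundary of $M^{\ast}$ therefore has codimension $\geq 2$ — together with normality of $M^{\ast}$ gives $\mathcal{A}^{\textit{sym}}=\bigoplus_{m\geq 0}H^{0}(\mathbb{M}\setminus S_{\mathbb{M}},\mathcal{L}^{\otimes m})$, where $S_{\mathbb{M}}$ is the codimension-$\geq 2$ locus (the Satake boundary together with the singular locus of $M^{\ast}$) as in the discussion above; this is exactly the object appearing in the text. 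By \cite{HU}, Corollary~4.4, there is an isomorphism $\mathbb{M}\setminus S_{\mathbb{M}}\xrightarrow{\sim}\mathbb{T}\setminus S_{\mathbb{T}}$ taking $\mathcal{L}$ to $\mathcal{O}_{\mathbb{T}}(1)$, where $\mathbb{T}$ has coarse space $\mathbb{P}(\Omega)$. Since weighted projective space is projectively normal and $S_{\mathbb{T}}$ has codimension $\geq 2$, one has $\bigoplus_{m\geq 0}H^{0}(\mathbb{T}\setminus S_{\mathbb{T}},\mathcal{O}(m))=\bigoplus_{m\geq 0}H^{0}(\mathbb{P}(\Omega),\mathcal{O}(m))=\C[x_{4},x_{10},\dots,x_{42}]$ with $\deg x_{k}=k$.

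Composing the two descriptions yields a graded-ring isomorphism $\mathcal{A}^{\textit{sym}}\cong\C[x_{4},\dots,x_{42}]$. Taking $f_{k}\in\mathcal{M}_{k}(\Gamma,1)$ to be the image of $x_{k}$ produces eleven generators of the asserted weights; they are algebraically independent since the $x_{k}$ are, equivalently because $11=\dim M^{\ast}+1$ is the Krull dimension of $\mathcal{A}^{\textit{sym}}$, so any eleven generators form a polynomial basis.

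The point that genuinely requires care, and the reason Theorem~\ref{theorem:Shiga} alone is not enough, is that an isomorphism of the bare varieties $M^{\ast}\cong\mathbb{P}(\Omega)$ need not respect the polarisations: the weights of $\Omega$ share the common factor $2$, so $\mathbb{P}(\Omega)$ is already isomorphic as a variety to a weighted projective space with strictly smaller weights, whose homogeneous coordinate ring is a different graded ring. Identifying $\mathcal{O}(1)$ on both sides is precisely the content of \cite{HU}, Corollary~4.4; granting it, everything else — the Koecher principle, Hartogs extension across the codimension-$\geq 2$ sets $S_{\mathbb{M}}$ and $S_{\mathbb{T}}$, projective normality of weighted projective space, and matching up the two gradings — is routine.
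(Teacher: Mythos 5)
Your proposal takes the same route as the paper: both rest on Hashimoto--Ueda, Corollary 4.4 --- the orbifold isomorphism identifying the $O(1)$ line bundles on $\mathbb{M}\setminus S_{\mathbb{M}}$ and $\mathbb{T}\setminus S_{\mathbb{T}}$ --- together with the identification of $\mathcal{A}^{\textit{sym}}$ with the graded ring of sections of multiples of $O(1)$ on $\mathbb{M}\setminus S_{\mathbb{M}}$, hence with the polynomial ring on the $\mathbb{T}\setminus S_{\mathbb{T}}$ side. You simply make explicit the routine steps the paper leaves implicit (Koecher/Hartogs extension across the codimension-two loci, projective normality of weighted projective space, and why the bare variety isomorphism of Theorem~\ref{theorem:Shiga} would not suffice), so the argument is correct and essentially identical.
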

This section is dedicated to the explicit construction of the $f_{k}$. 
The restriction map of Lemma \ref{lemma:Restriction of Maass space} extends to a homomorphism of the graded algebras
\[\begin{aligned}
   &\operatorname{Res}^{\mathcal{O}}_{\mathfrak{o}}\,:\,\mathcal{B}(\mathcal{O})\to\mathcal{B}(\mathfrak{o})\,,&
  \end{aligned}\]
where $\mathcal{B}(R)$ is the algebra generated by
\[\begin{aligned}
   &\bigoplus_{k=0}^{\infty}{\mathcal{M}(k,R)}\,,\,&&R\in\{\mathcal{O},\mathfrak{o}\}\,.&
  \end{aligned}\]

\begin{theorem}\label{theorem:symmetric generators for graded ring}
 The ring $\mathcal{A}^{\textit{sym}}$ is generated by the functions $E_{k}$, \(k\in \Omega\,.\) 
\end{theorem}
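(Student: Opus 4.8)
The plan is to combine the abstract structure result of Corollary~\ref{cor:abstract ring of symmetric mf} with an explicit verification of algebraic independence of the eleven Eisenstein series $E_{k}$, $k\in\Omega$. Since $\mathcal{A}^{\textit{sym}}$ is a polynomial ring in eleven algebraically independent generators of the exact weights listed in $\Omega$, it suffices to show that the $E_{k}$ are algebraically independent elements of $\mathcal{A}^{\textit{sym}}$; a dimension count in each weight then forces them to generate. Concretely, if $E_{4},\dots,E_{42}$ are algebraically independent, the subring $\C[E_{4},\dots,E_{42}]$ has the same Hilbert series as $\mathcal{A}^{\textit{sym}}$ (the generating function $\prod_{k\in\Omega}(1-t^{k})^{-1}$), so the inclusion $\C[E_{4},\dots,E_{42}]\hookrightarrow\mathcal{A}^{\textit{sym}}$ is an equality graded piece by graded piece.

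To prove algebraic independence I would pass to the restriction homomorphism $\operatorname{Res}^{\mathcal{O}}_{\mathfrak{o}}$ introduced just above the statement, together with the further restriction of Hermitian modular forms for $\mathfrak{o}$ to the Siegel upper half-plane of degree two (the diagonal $z=0$ locus), and ultimately to products of elliptic modular forms via the Witt map / Fourier--Jacobi expansion. Each $E_{k}$ is a Maa{\ss} lift, so by Lemma~\ref{lemma:Restriction of Maass space} its restriction to $\mathbb{H}_{2}(\C)$ is again a Hermitian Maa{\ss} form, with Fourier coefficients computed by the stated summation formula over $t\in\mathcal{O}$ with $t^{\ast}=s$; iterating, the double restriction to the elliptic $\times$ elliptic locus can be written explicitly in terms of the normalized elliptic Eisenstein series $g_{k-4}$ and cusp forms, using the arithmetic functions $\gamma(n,s)$ recorded at the end of Section~3. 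The key point is that under this composite restriction the eleven weights $k\in\Omega$ are spread out enough that the Jacobian of the map $(E_{4},\dots,E_{42})\mapsto$ (leading Fourier--Jacobi data) is nonsingular: one exhibits, for each $k$, a Fourier coefficient of the (doubly) restricted $E_{k}$ that is nonzero and, after subtracting off monomials in the lower-weight $E_{j}$, witnesses that $E_{k}$ is not algebraic over $\C[E_{4},\dots,\widehat{E_{k}},\dots]$. Equivalently, one checks that $240E_{4}$ (the singular form) and the $E_{k}$ for $k\ge 10$ have restrictions whose lowest nontrivial Fourier coefficients — computed from the explicit formulas in Theorem~\ref{theorem:Fourier expansion of Maass lifts of JE series} — are linearly independent modulo products, so that a nontrivial polynomial relation would force a nontrivial relation among algebraically independent elliptic Eisenstein series, a contradiction.

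The main obstacle is the last step: carrying out the explicit Fourier-coefficient computation far enough to certify that no polynomial relation among the $E_{k}$ holds. This requires computing sufficiently many coefficients $\gamma(n,s)$ and hence $\alpha^{\ast}(n)$, $\beta(T)$ for the relevant $T$ of bounded determinant — precisely the "sufficiently many Fourier coefficients of the restrictions to the Hermitian half-space" promised in the abstract — and then solving, in each weight $k$ up to $42$ (and, for the full ring, also tracking weight $252$, though that lies outside $\mathcal{A}^{\textit{sym}}$), the linear system expressing $E_{k}$ in the monomial basis $\{\prod E_{j}^{a_j}:\sum a_j j=k\}$ and verifying its consistency plus the nonvanishing of the coefficient of $E_{k}$ itself. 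Once this finite computation is done, algebraic independence follows, and with it the theorem; I would organize the write-up so that the abstract polynomiality (Corollary~\ref{cor:abstract ring of symmetric mf}) does all the structural work and the computation is reduced to checking a single nonvanishing determinant (or, weight by weight, a nonvanishing entry) in the matrix of restricted Fourier coefficients.
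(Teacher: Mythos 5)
Your overall reduction is sound as pure algebra: if the eleven $E_{k}$, $k\in\Omega$, were known to be algebraically independent, the Hilbert-series comparison with Corollary \ref{cor:abstract ring of symmetrric mf} would indeed force $\C[E_{4},\dots,E_{42}]=\mathcal{A}^{\textit{sym}}$. (Note the label is \ref{cor:abstract ring of symmetric mf}.) The genuine gap is the step where you propose to \emph{prove} algebraic independence via the restriction $\operatorname{Res}^{\mathcal{O}}_{\mathfrak{o}}$ and further restrictions to the Siegel or elliptic$\times$elliptic locus. This cannot work even in principle: $\mathbb{H}_{2}(\C)$ has complex dimension $4$, so the graded ring of Hermitian modular forms has transcendence degree $5$ (and the further restrictions land in rings of even smaller transcendence degree); hence the eleven restrictions $E_{k}^{\ast}$ are \emph{necessarily} algebraically dependent. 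No pattern of nonvanishing Fourier coefficients of the restrictions can therefore ``witness that $E_{k}$ is not algebraic over the others,'' and your final contradiction argument fails: a polynomial relation among the $E_{k}$ would only induce a relation among the restricted forms, which satisfy plenty of relations anyway. In addition, excluding relations in \emph{all} weights is not a finite computation, so the Hilbert-series route cannot be closed by ``sufficiently many Fourier coefficients'' of restrictions.

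The repair — and the paper's actual argument — is to notice that algebraic independence is not needed as an input. Since $\mathcal{A}^{\textit{sym}}$ is a polynomial ring whose generators sit exactly in the weights $k\in\Omega$, it is generated as a $\C$-algebra by its homogeneous pieces $\mathcal{A}^{\textit{sym}}_{k}$, $k\in\Omega$. So it suffices to show, for each of these eleven weights only, that the monomials in $E_{4},\dots,E_{42}$ of weight $k$ span $\mathcal{A}^{\textit{sym}}_{k}$. Because restriction is linear one has $\dim\mathcal{E}^{\ast}_{k}\leq\dim\mathcal{E}_{k}\leq\dim\mathcal{A}^{\textit{sym}}_{k}$, and $\dim\mathcal{A}^{\textit{sym}}_{k}$ is just the number of weight-$k$ monomials; hence the whole theorem reduces to the finite check that the \emph{restricted} monomials in each weight $k\in\Omega$ are linearly independent, carried out with the Fourier coefficients from Lemma \ref{lemma:Restriction of Maass space} and Theorem \ref{theorem:Fourier expansion of Maass lifts of JE series} (the Siegel restriction already suffices for all weights except $42$). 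This is precisely the computation you gesture at in your last paragraph, but its correct logical role is linear independence within the eleven weights of $\Omega$, not algebraic independence. Algebraic independence of the $E_{k}$ then follows a posteriori (eleven generators of a graded ring of transcendence degree eleven); if you wanted it directly, you would need a Jacobian-type criterion on the full ten-dimensional domain in the spirit of Proposition \ref{prop:Wronski-Determinant}, not data obtained after restriction.
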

\begin{proof}
 From Theorem \ref{theorem:Shiga} we know that $\mathcal{A}^{\textit{sym}}$ is a weighted polynomial ring in 11 variables. In particular this ring is generated by the homogeneous components $\mathcal{A}^{\textit{sym}}_{k},k\in\Omega$. We consider the subalgebra over $\C$ which is generated by the functions stated above and denote by $\mathcal{E}_{k}$ its $k$-th homogeneous component. Moreover let
\[\mathcal{E}^{\ast}_{k}=\{\operatorname{Res}^{\mathcal{O}}_{\mathfrak{o}}(f)\,|\,f\in\mathcal{E}_{k}\}\quad,\quad E_{k}^{\ast}=\operatorname{Res}^{\mathcal{O}}_{\mathfrak{o}}(E_{k})\,.\]
Obviously $\mathcal{E}^{\ast}_{k}$ coincides with the $k$-th homogeneous component of $\C[E_{4}^{\ast},\dots,E_{42}^{\ast}]$. We can calculate the dimensions of these spaces using the Fourier expansion of $E_{k}^{\ast}$ in Lemma \ref{lemma:Restriction of Maass space}.  One has
\begin{center}
\begin{tabular}{l|lllllllllll}
$k$ & 4 & 10 & 12 & 16 & 18 & 22 & 24 & 28 & 30 & 36 & 42\\\hline
$\dim \mathcal{E}_{k}^{\ast}$ & 1 & 1 & 2 & 3 & 2 & 4 & 6 & 9 & 8 & 17 & 23\\
$\dim \mathcal{A}_{k}^{\textit{sym}}$ & 1 & 1 & 2 & 3 & 2 & 4 & 6 & 9 & 8 & 17 & 23
\end{tabular}
\end{center}
where the last row is extracted from Corollary \ref{cor:abstract ring of symmetric mf}. Because of \[\dim \mathcal{E}_{k}^{\ast}\leq \dim \mathcal{E}_{k}\leq \dim \mathcal{A}_{k}^{\textit{sym}}\] we are done.
\end{proof}
\begin{rem}
\begin{enumerate}[a)]
 \item The reason, why the restriction method works here, is due to the fact that there is no modular form of weight $6$ on the Cayley half-space.
If we consider the restriction to the Siegel upper half-plane instead, we obtain the following table for the dimensions
\begin{center}
\begin{tabular}{l|lllllllllll}
$k$ & 4 & 10 & 12 & 16 & 18 & 22 & 24 & 28 & 30 & 36 & 42\\\hline
$\dim$ & 1 & 1 & 2 & 3 & 2 & 4 & 6 & 9 & 8 & 17 & 22
\end{tabular}
\end{center}
Thus we need the restriction to the Hermitian half-space only for the weight

$k=42$.
\item  In view of Lemma \ref{lemma:Restriction of Maass space} it is necessary to consider all integral Cayley numbers up to a certain norm. For the computations with Siegel modular forms this bound is given by 12 and for Hermitian modular we have to deal with all numbers up to norm 20. 
Since the theta series associated with the integral Cayley numbers coincides with the elliptic Eisenstein series of weight $4$, we get
\[\sharp\{r\in\mathcal{O}\,|\,N(r)=n\}=240\,\sigma_{3}(n)\quad,\quad n\in \N\,.\]
The coefficients needed to compute the dimensions of the $\mathcal{E}_{k}$ can be found on the third author's homepage:
\begin{flushleft}
  \url{http://www.matha.rwth-aachen.de/~woitalla/publikationen/}.
\end{flushleft}
\end{enumerate}
\end{rem}
\section{The skew-symmetric modular form}
In order to construct a skew-symmetric modular form, we take a different approach which is also used in \cite{Gr} to construct reflective modular forms. Let $II_{2,26}$ be the unique (up to isomorphism) even unimodular lattice of signature $(2,26)$. We define 
\[\index{$R_{-2}^{II_{2,26}}$}R_{-2}^{II_{2,26}}:=\{r\in II_{2,26}\,|\,(r,r)=-2\}\,.\]
Let $\mathcal{D}(II_{2,26})$ be the homogeneous domain of type IV for $II_{2,26}$. For any $r\in R_{-2}^{II_{2,26}} $ we define the rational quadratic divisor as
\[\mathcal{D}_{r}(II_{2,26})=\{[\mathcal{Z}]\in\mathcal{D}(II_{2,26})\,|\,(\mathcal{Z},r)=0\}\,.\]
The following statement is due to Borcherds and can be found in \cite{Bo2}, Theorem 10.1 and Example 2.
\begin{theorem}[Borcherds]\label{theorem:construction of Borcherds Phi12}
 There is a holomorphic modular form $\Phi_{12}$ with the properties \index{$\Phi_{12}$}\[\Phi_{12}\in\mathcal{M}_{12}(\operatorname{O}(II_{2,26})^{+},\det)\quad, \quad \operatorname{div}(\Phi_{12}) =\bigcup_{r\in R^{II_{2,26}}_{-2}}{\mathcal{D}_{r}(II_{2,26})}\,,\] 
where the vanishing order is exactly one on each irreducible component.
\end{theorem}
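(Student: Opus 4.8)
The plan is to obtain $\Phi_{12}$ as a Borcherds automorphic product, that is, via the singular theta lift of \cite{Bo2}; in fact the lattice $II_{2,26}$ is the running example of that paper, and the assertion is precisely \cite{Bo2}, Theorem 10.1 together with Example 2, so what one really has to do is feed the correct input into the machine and read off the output. Since $II_{2,26}$ is unimodular its discriminant form is trivial, the associated Weil representation is one-dimensional, and the input datum of the lift is an ordinary scalar-valued weakly holomorphic modular form for $\sl$ of weight $1-26/2=-12$. First I would observe that the unique (up to a scalar) such form whose only pole is a simple pole at the cusp is $\Delta^{-1}$,
\[\frac{1}{\Delta(\tau)}=\sum_{n\geq -1}c(n)\,q^{n}=q^{-1}+24+324\,q+\cdots\,,\qquad q=e^{2\pi i\tau}\,,\]
so that $c(-1)=1$, $c(0)=24$, and $c(n)=0$ for $n<-1$.

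Feeding $\Delta^{-1}$ into the Borcherds lift yields a meromorphic modular form $\Phi_{12}$ on $\mathcal{D}(II_{2,26})$ whose weight, by the general formula for automorphic products, equals $c(0)/2=12$; this explains the subscript. The divisor of such a product is supported on the rational quadratic divisors $\mathcal{D}_{\lambda}(II_{2,26})$ attached to vectors $\lambda\in II_{2,26}$ of negative norm, and for primitive $\lambda$ the multiplicity of $\Phi_{12}$ along $\mathcal{D}_{\lambda}(II_{2,26})$ is $\sum_{k\geq 1}c\!\left(k^{2}(\lambda,\lambda)/2\right)$. Since the only negative index at which $c$ is nonzero is $-1$, this sum fails to vanish exactly when $(\lambda,\lambda)=-2$ and $k=1$, in which case it equals $c(-1)=1$. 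Hence $\operatorname{div}(\Phi_{12})=\bigcup_{r\in R_{-2}^{II_{2,26}}}\mathcal{D}_{r}(II_{2,26})$ with vanishing order exactly one on each irreducible component; in particular $\Phi_{12}$ has no poles on $\mathcal{D}(II_{2,26})$, and holomorphy at the boundary is part of Borcherds' statement, coming from the convergence of the infinite product near each cusp and the non-negativity of the pairing of the associated Weyl vector with the positive cone.

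It remains to pin down the character. Because $\Phi_{12}$ vanishes to first order along each reflection hyperplane $\mathcal{D}_{r}(II_{2,26})$, it must change sign under the reflection $\sigma_{r}\in\operatorname{O}(II_{2,26})^{+}$ in $r$, so the character of $\Phi_{12}$ coincides with $\det$ on the subgroup generated by the $(-2)$-reflections. To upgrade this to all of $\operatorname{O}(II_{2,26})^{+}$ one invokes Borcherds' explicit formula for the multiplier system of an automorphic product (in terms of $c(0)$ and the Weyl vector), which in this case forces the character to be exactly $\det$. The main obstacle is precisely this last step together with the analytic input underlying it: verifying that the regularized theta integral converges, that the product form extends holomorphically across every component of the Baily-Borel boundary, and that its multiplier system is $\det$ with no extra finite-order factor. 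These are the technical core of \cite{Bo2}, Theorem 10.1, and in the paper I would cite them directly rather than reprove them; the point is that for $II_{2,26}$ the input $\Delta^{-1}$ has the simplest possible principal part, which is exactly why the divisor comes out to be the full set of $(-2)$-reflection hyperplanes with multiplicity one.
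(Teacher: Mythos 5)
Your proposal is correct and takes essentially the same route as the paper, which offers no independent proof but simply attributes the statement to \cite{Bo2}, Theorem 10.1 and Example 2. The details you supply — feeding the weight $-12$ input $1/\Delta = q^{-1}+24+324q+\cdots$ into the Borcherds lift, reading off the weight $c(0)/2=12$, computing the divisor multiplicities $\sum_{k\geq 1}c\bigl(k^{2}(r,r)/2\bigr)=c(-1)=1$ exactly for $(r,r)=-2$, and deferring convergence, boundary behaviour and the $\det$ character to Borcherds — are the standard way this special case is extracted from \cite{Bo2}, so the two arguments coincide in substance.
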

In \cite{Bo2}, Example 2, Borcherds calculates the Fourier expansion of $\Phi_{12}$. It turns out that $\Phi_{12}$ reflects the Weyl denominator formula for the fake monster Lie algebra. 
\vspace{5mm}

\noindent We consider the embedding $L_{2}\hookrightarrow II_{2,26}$ and denote by $K$ the orthogonal complement of $L_{2}$ in $II_{2,26}$. Each vector $r\in II_{2,26}$ has a unique decomposition
\[r=\alpha(r)+\beta(r)\quad,\quad \alpha(r)\in L_{2}\,,\,\beta(r)\in K^{\sharp},\]
where 
\[K^{\sharp}=\{l \in K\otimes\R\,|\,(l,h)\in\Z\;\;\text{for all}\;\; h \in K\}\] 
is the dual lattice. We set
\[\index{$R_{-2}(K)$}R_{-2}(K)=\{r\in II_{2,26}\,|\,(r,r)=-2\,,\, r\perp L_{2} \},\]
which is contained in the negative definite lattice $K$ and hence finite. Consequently we define \index{$\operatorname{N}(K)$}$\operatorname{N}(K)=\frac{\sharp R_{-2}(K)}{2}\in\N\,.$
The next statement is a special case of Theorem 8.2 and Corollary 8.12 in \cite{GHS2} and describes the construction of a quasi-pullback from Borcherds function $\Phi_{12}$.
\begin{theorem}\label{theorem:quasi pullback form phi12}
Fix an embedding $L_{2}\hookrightarrow II_{2,26}$ and set
\[\left.\Phi\right|^{\textit{(QP)}}_{L_{2}}(\mathcal{Z})=\displaystyle\left.\frac{\Phi_{12}(\mathcal{Z})}{\prod_{r\in R_{-2}(K)/{\{\pm 1\}}}{(\mathcal{Z},r)}}\right|_{\mathcal{D}},\]
where in the product one fixes a set of representatives for $R_{-2}(K)/\{\pm 1\}$. Then  $\left.\Phi\right|^{\textit{(QP)}}_{L_{2}}$ belongs to $\mathcal{M}_{12+\operatorname{N}(K)}(\operatorname{O}(L_{2})^{+},\det)$
and vanishes exactly on all rational quadratic divisors  
\[\mathcal{D}_{\alpha(r)}=\{[\mathcal{Z}]\in\mathcal{D}\,|\,(\mathcal{Z},\alpha(r))=0\}\] 
where $r$ runs through the set \(R_{-2}^{II_{2,26}}\) and $(\alpha(r),\alpha(r))<0$. If $\operatorname{N}(K)>0$ we say that $\left.\Phi\right|^{\textit{(QP)}}_{L_{2}}$ is a quasi-pullback of $\Phi_{12}$\index{quasi-pullback of $\Phi_{12}$}. In this case $\left.\Phi\right|^{\textit{(QP)}}_{L_{2}}$ is a cusp form. 
 \end{theorem}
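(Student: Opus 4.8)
Since the statement is the specialisation to the embedding $L_{2}\hookrightarrow II_{2,26}$ of the general quasi-pullback construction of Gritsenko--Hulek--Sankaran \cite{GHS2}, the plan is to reproduce the three ingredients of that construction in the present case: holomorphy and non-triviality (via a local order-of-vanishing computation), the transformation law, and the description of the divisor together with the behaviour at the cusps. The point of departure is that the naive restriction of $\Phi_{12}$ to $\mathcal{D}^{\bullet}$ is identically zero: each $r\in R_{-2}(K)$ is orthogonal to $L_{2}$, so $(\mathcal{Z},r)=0$ for every $\mathcal{Z}\in\mathcal{D}^{\bullet}$, whence $\mathcal{D}\subseteq\mathcal{D}_{r}(II_{2,26})\subseteq\operatorname{div}(\Phi_{12})$ by Theorem~\ref{theorem:construction of Borcherds Phi12}. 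The quasi-pullback is the correct renormalisation: one divides out exactly the $\operatorname{N}(K)$ linear forms $(\mathcal{Z},r)$, $r\in R_{-2}(K)/\{\pm1\}$, responsible for this forced vanishing, and shows the quotient survives as a holomorphic function on $\mathcal{D}^{\bullet}$.

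The heart of the argument is a local computation near a general point $p$ of $\mathcal{D}$ regarded inside $\mathcal{D}(II_{2,26})$. For $\mathcal{Z}\in\mathcal{D}^{\bullet}$ one has $(\mathcal{Z},r)=(\mathcal{Z},\alpha(r))$ because $\beta(r)\perp L_{2}$, and since $\mathcal{D}^{\bullet}$ spans $L_{2}\otimes\C$ the linear form $(\cdot,r)$ vanishes on all of $\mathcal{D}^{\bullet}$ exactly when $\alpha(r)=0$, i.e.\ $r\in R_{-2}(K)$. By Theorem~\ref{theorem:construction of Borcherds Phi12} the divisor $\operatorname{div}(\Phi_{12})$ is reduced with irreducible components the distinct $\mathcal{D}_{r}(II_{2,26})$, $r\in R_{-2}^{II_{2,26}}/\{\pm1\}$, and such a component meets the general point $p$ only if it contains all of $\mathcal{D}$, hence only if $r\in R_{-2}(K)$. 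Consequently, in a neighbourhood of $p$ one has the equality of divisors $\operatorname{div}(\Phi_{12})=\sum_{r\in R_{-2}(K)/\{\pm1\}}\mathcal{D}_{r}(II_{2,26})=\operatorname{div}\bigl(\prod_{r\in R_{-2}(K)/\{\pm1\}}(\mathcal{Z},r)\bigr)$, each summand with multiplicity one, so the meromorphic quotient $\Phi_{12}(\mathcal{Z})\big/\prod_{r}(\mathcal{Z},r)$ is holomorphic and nowhere zero near $p$. Near the non-generic points of $\mathcal{D}^{\bullet}$ it remains locally bounded (the numerator vanishes along each $\mathcal{D}_{r}(II_{2,26})$ to order at least that of the denominator), so the Riemann extension theorem gives a holomorphic extension across $\mathcal{D}^{\bullet}$; its restriction $\Phi|^{(QP)}_{L_{2}}$ is then holomorphic on $\mathcal{D}^{\bullet}$ and, by the non-vanishing at $p$, not identically zero.

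Next is the bookkeeping of weight and character. As $L_{2}$ is unimodular, $II_{2,26}=L_{2}\oplus K$, so every $g\in\operatorname{O}(L_{2})^{+}$ lifts to $\tilde g=g\oplus\operatorname{id}_{K}\in\operatorname{O}(II_{2,26})^{+}$ with $\det\tilde g=\det g$ and acting on $\mathcal{D}^{\bullet}\subseteq L_{2}\otimes\C$ through $g$. Since $\tilde g$ fixes each $r\in R_{-2}(K)$, the denominator is $\tilde g$-invariant, while Theorem~\ref{theorem:construction of Borcherds Phi12} gives $\Phi_{12}(\tilde g\mathcal{Z})=\det(g)\,\Phi_{12}(\mathcal{Z})$; hence $\Phi|^{(QP)}_{L_{2}}(g\mathcal{Z})=\det(g)\,\Phi|^{(QP)}_{L_{2}}(\mathcal{Z})$. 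From $\Phi_{12}(t\mathcal{Z})=t^{-12}\Phi_{12}(\mathcal{Z})$ and $(t\mathcal{Z},r)=t\,(\mathcal{Z},r)$ one reads off the weight $12+\operatorname{N}(K)$, so $\Phi|^{(QP)}_{L_{2}}\in\mathcal{M}_{12+\operatorname{N}(K)}(\operatorname{O}(L_{2})^{+},\det)$.

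Finally the zero divisor is obtained by intersecting $\operatorname{div}(\Phi_{12})$ with $\mathcal{D}$: using again $(\mathcal{Z},r)=(\mathcal{Z},\alpha(r))$ on $\mathcal{D}^{\bullet}$ one gets $\mathcal{D}_{r}(II_{2,26})\cap\mathcal{D}=\mathcal{D}_{\alpha(r)}$, which is all of $\mathcal{D}$ (the factor already removed) when $\alpha(r)=0$, is empty when $(\alpha(r),\alpha(r))\geq0$ by the standard description of type~IV divisors, and is a genuine rational quadratic divisor precisely when $(\alpha(r),\alpha(r))<0$; running over $r\in R_{-2}^{II_{2,26}}$ this yields exactly the asserted zero locus. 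For the cusp-form assertion when $\operatorname{N}(K)>0$ I would invoke Borcherds' explicit infinite-product expansion of $\Phi_{12}$ from \cite{Bo2}: restricting it at a rational boundary component of $\mathcal{D}$ and dividing through by the factors $(\mathcal{Z},r)$, $r\in R_{-2}(K)$, one checks that the constant Fourier--Jacobi coefficient of $\Phi|^{(QP)}_{L_{2}}$ vanishes there, so $\Phi|^{(QP)}_{L_{2}}$ vanishes at every cusp. The main obstacle will be the local multiplicity claim of the second paragraph --- that $\Phi_{12}$ vanishes along $\mathcal{D}$ to order exactly $\operatorname{N}(K)$ --- which is precisely where one uses that $\operatorname{div}(\Phi_{12})$ is reduced and that no further reflective divisor of $\Phi_{12}$ passes through a general point of $\mathcal{D}$; granted this, holomorphy, non-triviality and the shape of the divisor follow formally, and only the cusp-form statement genuinely needs the fine structure of $\Phi_{12}$.
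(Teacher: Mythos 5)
The paper does not prove this theorem at all: it is quoted as a special case of Theorem 8.2 and Corollary 8.12 of \cite{GHS2}, so the comparison is with that cited argument. Your treatment of the first three assertions follows the same standard route and is essentially sound: the quotient is interpreted near a general point $p$ of $\mathcal{D}$ inside the big domain, where (using local finiteness of the family $\mathcal{D}_{r}(II_{2,26})$, which you should state, and the fact that $\operatorname{div}(\Phi_{12})$ is reduced) the divisor of $\Phi_{12}$ coincides with the divisor of the product $\prod_{r\in R_{-2}(K)/\{\pm1\}}(\mathcal{Z},r)$, so the quotient extends holomorphically and is non-zero at $p$; the splitting $II_{2,26}=L_{2}\oplus K$ (unimodularity of $L_{2}$) gives the lift $g\oplus\operatorname{id}_{K}\in\operatorname{O}(II_{2,26})^{+}$, whence weight $12+\operatorname{N}(K)$ and character $\det$; and intersecting the remaining components of $\operatorname{div}(\Phi_{12})$ with $\mathcal{D}$, discarding $\alpha(r)$ with $(\alpha(r),\alpha(r))\geq0$ because those divisors are empty, gives the stated zero locus. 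These parts are correct.

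The genuine gap is the final assertion, that $\left.\Phi\right|^{\textit{(QP)}}_{L_{2}}$ is a cusp form whenever $\operatorname{N}(K)>0$. This is precisely the content of Corollary 8.12 in \cite{GHS2}, cited separately by the paper because it does not follow formally from the construction, and your proposal replaces it by the sentence that one ``checks that the constant Fourier--Jacobi coefficient vanishes'' after restricting Borcherds' product. That is not a proof, and the step is more delicate than it suggests: the infinite product for $\Phi_{12}$ converges only in a neighbourhood of one cusp (one Weyl chamber), while cuspidality must be verified at every rational boundary component of $\mathcal{D}$, i.e.\ for every isotropic sublattice of $L_{2}$ and every compatible extension to $II_{2,26}$; moreover the quasi-pullback is, at each such cusp, the $\operatorname{N}(K)$-th Taylor coefficient of $\Phi_{12}$ in the directions of $R_{-2}(K)$, and one has to show that the Fourier coefficients of this Taylor coefficient attached to norm-zero index vectors vanish. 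This uses the specific structure of the Fourier expansion of $\Phi_{12}$ (the fake monster denominator formula) and is the part of the cited theorem with real content; as written, your argument simply asserts the conclusion. Either carry out that Fourier-expansion analysis or, as the paper does, quote Corollary 8.12 of \cite{GHS2} explicitly for this point.
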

Let $\mathcal{N}$ be the Niemeier lattice with root system $E_{8}\perp E_{8}\perp E_{8}$, see \cite{Nie}. Since $II_{2,26}\cong U \perp U_{1}\perp \mathcal{N}$, where 
\[U,U_{1}\cong\begin{pmatrix}
0&1\\1&0                                                                                                                                                                                                                                                                                                                                                   \end{pmatrix}\] 
are two integral hyperbolic planes, we can consider the natural embedding \[L_{2}\hookrightarrow U \perp U_{1}\perp \mathcal{N}\,,\]  where $L$ is identified with one of the three copies of $L$ in the root system of $\mathcal{N}$. Denote by $\Phi$ the quasi-pullback of $\Phi_{12}$ which corresponds to this embedding.
\begin{cor}\label{cor:construction of skew-symmetric form}
 There is a skew-symmetric cusp form $\Phi$ of weight 252, whose divisor coincides with the $\Gamma_{2}$-orbit of
\[\left\{\begin{pmatrix}
            \omega &z\\\overline{z}&\tau
           \end{pmatrix}
\in\mathbb{H}_{2}(\mathcal{C})\,\Big|\,\overline{z}=-z\right\}\,.\]
The vanishing order of $\Phi$ is one on each irreducible component of $\operatorname{div}(\Phi)$.
\end{cor}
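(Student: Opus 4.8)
The plan is to read off every claim from Theorem~\ref{theorem:quasi pullback form phi12} once the orthogonal complement $K$ of $L_{2}$ in $II_{2,26}$ and the finite set $R_{-2}(K)$ have been pinned down. Reading off $\operatorname{Gram}(L_{2})$, the lattice $L_{2}$ is the orthogonal sum of two hyperbolic planes and a copy of $E_{8}$ with its form negated; under the embedding $L_{2}\hookrightarrow U\perp U_{1}\perp\mathcal{N}$ the two hyperbolic planes are $U$ and $U_{1}$ and the $E_{8}$-part is one of the three $E_{8}$-summands of $\mathcal{N}=E_{8}\perp E_{8}\perp E_{8}$. Hence $K$ is the span of the two remaining summands; in particular $K$ is again even unimodular and negative definite, $K^{\sharp}=K$, and every $(-2)$-vector of $K$ lies in one of the two summands. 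Therefore $\sharp R_{-2}(K)=240+240=480$, so $\operatorname{N}(K)=240>0$, and Theorem~\ref{theorem:quasi pullback form phi12} at once gives $\Phi\in\mathcal{M}_{12+\operatorname{N}(K)}(\operatorname{O}(L_{2})^{+},\det)=\mathcal{M}_{252}(\operatorname{O}(L_{2})^{+},\det)$ together with the fact that $\Phi$ is a cusp form.

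For skew-symmetry I would use Lemma~\ref{lemma:biholomorphic correspondence of modular groups}: the involution $Z\mapsto Z^{tr}$ of $\mathbb{H}_{2}(\mathcal{C})$ is the image of the element $R_{K}\in\operatorname{O}(L_{2})^{+}$ of determinant $-1$, and this element acts by the \emph{linear} map $Z\mapsto Z^{tr}$, so with trivial automorphy factor. Since $\Phi$ transforms under $\operatorname{O}(L_{2})^{+}$ with the determinant character and the determinant is trivial on the image of $\operatorname{SO}(L_{2})^{+}$, the form $\Phi$ is an honest $\Gamma_{2}$-modular form which additionally satisfies $\Phi(Z^{tr})=\det(R_{K})\,\Phi(Z)=-\Phi(Z)$; that is, $\Phi$ is skew-symmetric.

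It remains to compute $\operatorname{div}(\Phi)$. By Theorem~\ref{theorem:quasi pullback form phi12} it is the reduced divisor $\sum_{r}\mathcal{D}_{\alpha(r)}$, the sum over $r\in R_{-2}^{II_{2,26}}$ with $(\alpha(r),\alpha(r))<0$. For such $r$, orthogonality of $r=\alpha(r)+\beta(r)$ gives $(\alpha(r),\alpha(r))=-2-(\beta(r),\beta(r))$; but $\beta(r)\in K^{\sharp}=K$ and $K$ is even and negative definite, so $(\beta(r),\beta(r))\in 2\Z_{\le 0}$ and the inequality $(\alpha(r),\alpha(r))<0$ forces $(\beta(r),\beta(r))=0$, hence $\beta(r)=0$ and $r=\alpha(r)\in L_{2}$ with $(r,r)=-2$. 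Conversely every $r\in R_{-2}(L_{2})$ occurs. Thus $\operatorname{div}(\Phi)$ is the reduced union of the rational quadratic divisors $\mathcal{D}_{r}$, $r\in R_{-2}(L_{2})$, intersected with $\mathcal{D}$. Now the Coxeter basis vector $e_{0}\in\mathcal{O}\cong L\subseteq L_{2}$ has $(e_{0},e_{0})_{L_{2}}=-\sigma(e_{0},e_{0})=-2$, so $e_{0}\in R_{-2}(L_{2})$, and transporting the linear equation $(\mathcal{Z},e_{0})=0$ to $\mathcal{H}(L_{2})$ and then via $\Psi$ to $\mathbb{H}_{2}(\mathcal{C})$ turns it into $\sigma(z,e_{0})=0$, i.e.\ $z+\overline{z}=0$, i.e.\ $\overline{z}=-z$. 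Hence $\mathcal{D}_{e_{0}}$ is exactly the set displayed in the corollary. Finally, $L_{2}$ is indefinite unimodular and splits off two hyperbolic planes, so by Eichler's criterion (cf.\ \cite{GHS}) $\operatorname{O}(L_{2})^{+}$ acts transitively on the set of primitive vectors of norm $-2$; since $(-2)$-vectors are automatically primitive, it acts transitively on $R_{-2}(L_{2})$, hence on $\{\mathcal{D}_{r}\}$; and since the extra generator $R_{K}$ of $\operatorname{O}(L_{2})^{+}$ over $\operatorname{SO}(L_{2})^{+}$ fixes $\mathcal{D}_{e_{0}}$ — indeed $\overline{z}=-z$ is equivalent to $\overline{\overline{z}}=-\overline{z}$ — already $\Gamma_{2}$ acts transitively on these divisors. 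Combining everything, $\operatorname{div}(\Phi)$ is the $\Gamma_{2}$-orbit of $\{\overline{z}=-z\}$, with vanishing order one on each irreducible component.

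The part I expect to be the real work is this last paragraph: using unimodularity of $K$ to exclude every divisor component except those coming from $(-2)$-vectors of $L_{2}$, identifying the distinguished component $\mathcal{D}_{e_{0}}$ with the set $\{\overline{z}=-z\}$, and carrying out the orbit bookkeeping that upgrades transitivity of $\operatorname{O}(L_{2})^{+}$ to transitivity of $\Gamma_{2}$. By comparison, the weight $252$, the cusp-form property and the skew-symmetry drop out formally from Theorem~\ref{theorem:quasi pullback form phi12} and Lemma~\ref{lemma:biholomorphic correspondence of modular groups}.
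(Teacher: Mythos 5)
Your proposal is correct and follows essentially the paper's own route: $\operatorname{N}(K)=240$ from the two residual $E_{8}$-summands of $\mathcal{N}$, Theorem~\ref{theorem:quasi pullback form phi12} for the weight $252$, the cusp property and the character $\det$ (whence skew-symmetry via the element $R_{K}$ of Lemma~\ref{lemma:biholomorphic correspondence of modular groups}), unimodularity and negative definiteness of $K$ forcing $\beta(r)=0$, and the Eichler criterion for the orbit description of the divisor. The only deviation is the choice of representative: you take $r=e_{0}$, which indeed cuts out exactly $\{\overline{z}=-z\}$ and matches the reflection $z\mapsto-\overline{z}$ invoked in the proof of Theorem~\ref{theorem:structure full graded ring}, whereas the paper takes $r=(e_{0}+e_{5}+e_{6}+e_{7})/2$; both are norm-one elements of $\mathcal{O}$, so the corresponding divisors lie in one $\Gamma_{2}$-orbit and the statement is unaffected.
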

\begin{proof}
For the determination of the weight we use the well-known fact that the number of $2$-roots of $E_{8}$ is $240$. Hence $\operatorname{N}(K)=240$ and Theorem \ref{theorem:quasi pullback form phi12} yields that $\Phi$ is a skew-symmetric cusp form of weight $252$. Let $r=\alpha(r)+\beta(r)\in R_{-2}^{II_{2,26}}$ such that $(\alpha,\alpha)< 0$. The choice of our embedding already implies $(\alpha(r),\alpha(r))=-2$ and $\beta(r)=0$. From the last Theorem we deduce that each irreducible component of $\operatorname{div}(\Phi)$ is determined by a $-2$-root in $L_{2}$ and the vanishing order of $\Phi$  is one. Let $g\in\operatorname{SO}(L_{2})^{+}$ and let $\alpha\in L_{2}$ be a -2-root. One has
\[g.\mathcal{D}_{\alpha}=\mathcal{D}_{g^{-1}.\alpha}\]
 Since $L_{2}$ is unimodular, the Eichler criterion implies that $\operatorname{div}(\Phi)$ equals the $\operatorname{SO}(L_{2})^{+}$-orbit of $\mathcal{D}_{\alpha}$, see \cite{GHS}. We can choose the representative 
\begin{equation}\label{-2-representative for divisor}
 \alpha=(0,0,r,0,0)\in L_{2}\quad\text{ where }\quad r=\frac{e_{0}+e_{5}+e_{6}+e_{7}}{2}
\end{equation}
 to obtain the forementioned subset of $\mathbb{H}_{2}(\mathcal{C})$ as the biholomorphic image of $\mathcal{D}_{\alpha}$.
\end{proof}
 \begin{rem}
  This modular form has already been constructed in \cite{FS}, Lemma 5.1. One may also use \cite{Bo2}, Theorem 10.1, to construct $\Phi$ as a Borcherds lift. To this end we consider the nearly holomorphic modular form for the group $\sl$
\[\frac{g_{4}^{2}}{\Delta}=q^{-1}+504+16404q+q^{2}(\cdots)=\sum_{n\geq -1}{c(n)q^{n}}\,,\,q=\exp(2\pi i \tau),\tau\in\mathbb{H}\]
where $\Delta$ denotes the normalized Ramanujan $\Delta$-function. Now the multiplicative lifting yields a modular form of weight $\frac{c(0)}{2}=252$ whose zeroes lie on the rational quadratic divisors $\mathcal{D}_{r}$ where $r\in II_{2,10}$ such that $(r,r)=-2$. The multiplicity of each $D_{r}$ is $\sum_{n>0}c(-n^{2})=1$. Hence this modular form coincides with $\Phi$ up to a non-zero multiple in $\C$.
 \end{rem}
 This leads us to the main result.
\begin{theorem}\label{theorem:structure full graded ring}
 The graded ring $\mathcal{A}$ is generated by the $E_{k},k\in\Omega$, and $\Phi$. There is a unique polynomial $p\in\C[X_{1},\dots,X_{11}]$ such that 
\[\Phi^{2}=p(E_{4},E_{10},E_{12},E_{16},E_{18},E_{22},E_{24},E_{28},E_{30},E_{36},E_{42})\,.\]
\end{theorem}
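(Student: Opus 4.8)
The plan is to combine the structural result of Corollary \ref{cor:abstract ring of symmetric mf} and Theorem \ref{theorem:symmetric generators for graded ring} for the symmetric part with an elementary argument about the $\Z/2$-grading of $\mathcal{A}$ induced by the transposition involution $Z\mapsto Z^{tr}$. Write $\mathcal{A}=\mathcal{A}^{\textit{sym}}\oplus\mathcal{A}^{\textit{skew}}$ for the eigenspace decomposition under this involution. Since $\operatorname{O}(L_{2})^{+}$ differs from $\operatorname{SO}(L_{2})^{+}$ by $R_{K}$, and since by Proposition \ref{vorlage} the only nontrivial character of $\operatorname{O}(L_{2})^{+}$ is the determinant, every homogeneous $f\in\mathcal{A}$ is either symmetric (trivial character) or skew-symmetric ($\det$-character), so the decomposition is exhaustive. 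By Theorem \ref{theorem:symmetric generators for graded ring} we have $\mathcal{A}^{\textit{sym}}=\C[E_{4},\dots,E_{42}]$. The first key point is that $\mathcal{A}^{\textit{skew}}$ is a module over $\mathcal{A}^{\textit{sym}}$ which is \emph{free of rank one}, generated by $\Phi$: indeed $\Phi\in\mathcal{A}^{\textit{skew}}$ by Corollary \ref{cor:construction of skew-symmetric form}, and if $g\in\mathcal{A}^{\textit{skew}}$ is any skew-symmetric form then $g/\Phi$ is a meromorphic function on $M^{\ast}$ whose only possible poles lie along $\operatorname{div}(\Phi)$; but $g$, being skew-symmetric, must itself vanish on the fixed locus $\{Z^{tr}=Z\}^{c}=\operatorname{div}(\Phi)$ (the subset described in Corollary \ref{cor:construction of skew-symmetric form}), and since that divisor is reduced (vanishing order one on each component), $g/\Phi$ is holomorphic, hence $g/\Phi\in\mathcal{A}^{\textit{sym}}$. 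This gives $\mathcal{A}=\mathcal{A}^{\textit{sym}}\oplus\Phi\cdot\mathcal{A}^{\textit{sym}}$, so $\mathcal{A}$ is generated as a $\C$-algebra by $E_{4},\dots,E_{42}$ and $\Phi$, which is the first assertion.

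For the relation, observe that $\Phi^{2}$ is a symmetric modular form of weight $504$ with trivial character, hence $\Phi^{2}\in\mathcal{A}^{\textit{sym}}_{504}=\C[E_{4},\dots,E_{42}]_{504}$. Since $E_{4},\dots,E_{42}$ are algebraically independent (Corollary \ref{cor:abstract ring of symmetric mf}), the polynomial $p\in\C[X_{1},\dots,X_{11}]$ with $\Phi^{2}=p(E_{4},\dots,E_{42})$ is unique, provided it exists — and existence is exactly the membership $\Phi^{2}\in\mathcal{A}^{\textit{sym}}$ just noted, together with the fact that $\mathcal{A}^{\textit{sym}}$ is \emph{all} of $\C[E_{4},\dots,E_{42}]$ in each weight. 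Concretely one should say: $p$ is weighted-homogeneous of degree $504$ when $X_{i}$ is assigned weight $i\in\Omega$, and the dimension of that graded piece of the weighted polynomial ring is finite, so $p$ can in principle be pinned down by computing enough Fourier coefficients of $\Phi$ (e.g.\ via the Borcherds product expansion of the Remark following Corollary \ref{cor:construction of skew-symmetric form}) and of the monomials in the $E_{k}$; but uniqueness already follows abstractly and no computation is needed for the theorem as stated.

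The step I expect to be the main obstacle is the freeness of $\mathcal{A}^{\textit{skew}}$ over $\mathcal{A}^{\textit{sym}}$ — specifically, verifying carefully that the quotient $g/\Phi$ descends to a genuine holomorphic section, i.e.\ that (a) the full reduced fixed divisor of the transposition involution on $\mathcal{D}/\Gamma_{2}$ is precisely $\operatorname{div}(\Phi)$ as identified in Corollary \ref{cor:construction of skew-symmetric form}, with no extra components coming from the Satake boundary, and (b) a skew-symmetric holomorphic modular form really does vanish to order exactly one (not higher in a way that would still be fine, but at least to order one) along each component of that fixed locus, so that $g/\Phi$ has no poles. Part (a) uses that $R_{K}$ acts on $\mathbb{H}_{2}(\mathcal{C})$ as $Z\mapsto Z^{tr}$ and that its fixed point set is the $\Gamma_{2}$-orbit of $\{\overline{z}=z\}$, which is the complement of the locus $\{\overline{z}=-z\}$ appearing in Corollary \ref{cor:construction of skew-symmetric form} — one must match these up, using the reflection $R_{K}$ written in coordinates; part (b) is the standard fact that a function anti-invariant under an involution vanishes on the fixed locus of that involution, combined with reducedness of $\operatorname{div}(\Phi)$ from Corollary \ref{cor:construction of skew-symmetric form}. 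Once these geometric points are nailed down, the rest is the purely formal module-decomposition argument above.
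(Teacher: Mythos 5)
Your overall strategy is the same as the paper's: split off the skew-symmetric part, divide a skew form by $\Phi$, invoke Theorem \ref{theorem:symmetric generators for graded ring} for the quotient and for $\Phi^{2}$, and get uniqueness of $p$ from the algebraic independence of $E_{4},\dots,E_{42}$. However, the one step that carries all the weight --- that every skew-symmetric form vanishes along $\operatorname{div}(\Phi)$ --- is not correctly established in your write-up, and you yourself flag it without resolving it. You justify the vanishing by saying a skew form must vanish on the fixed locus of the transposition involution $Z\mapsto Z^{tr}$ and you identify that locus (or its ``complement'', as written) with $\operatorname{div}(\Phi)$. This identification is false: the fixed-point set of $Z\mapsto Z^{tr}$ in $\mathbb{H}_{2}(\mathcal{C})$ is $\{\overline{z}=z\}$, i.e. $z\in\C e_{0}$, which has codimension $7$ and is neither a divisor nor the set $\{\overline{z}=-z\}$ of Corollary \ref{cor:construction of skew-symmetric form}, nor is either set the complement of the other. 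Vanishing on a codimension-$7$ locus gives no divisibility by $\Phi$, so ``matching these up using $R_{K}$'' cannot work with $R_{K}$ alone: you need a \emph{different} determinant $-1$ element of $\operatorname{O}(L_{2})^{+}$ whose fixed-point set is a divisor lying in $\operatorname{div}(\Phi)$.

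This is exactly what the paper's proof supplies: it takes the reflection $M_{\alpha}$ in the $-2$-root $\alpha$ of (\ref{-2-representative for divisor}) (equivalently, the composite of $Z\mapsto Z^{tr}$ with the element acting by $z\mapsto -z$), which satisfies $\det(M_{\alpha})=-1$ and $M_{\alpha}\langle(\omega,z,\tau)\rangle=(\omega,-\overline{z},\tau)$. Anti-invariance of a skew form $f$ under $M_{\alpha}$ forces $f$ to vanish on the fixed divisor $\mathcal{D}_{\alpha}=\{\overline{z}=-z\}$, and modularity under $\Gamma_{2}$ spreads this vanishing over the full $\Gamma_{2}$-orbit, which is precisely $\operatorname{div}(\Phi)$ with multiplicity one by Corollary \ref{cor:construction of skew-symmetric form}; then $f/\Phi$ is holomorphic (Koecher's principle handles the boundary, which you also left implicit) and symmetric, and the rest of your formal argument goes through as in the paper. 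So the gap is concrete: replace the transposition involution by the root reflection $M_{\alpha}$ (or its composite description) in the vanishing step; without that substitution the freeness claim for $\mathcal{A}^{\textit{skew}}$ over $\mathcal{A}^{\textit{sym}}$ is unproved.
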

\begin{proof}
 We show that any modular form can be expressed as a polynomial in the 12 functions from above. For convenience we represent modular forms as functions on the domain $\mathcal{H}(L_{2})$. Let $f\in\mathcal{A}$. In view of Theorem \ref{theorem:symmetric generators for graded ring} we can assume that $f$ is skew-symmetric. We take $\alpha\in L_{2}$ as in (\ref{-2-representative for divisor}) and define $M_{\alpha}\in\operatorname{O}(L_{2})^{+}$ as the reflection at the hyperplane perpendicular to $\alpha$.  From the identities
\[\begin{aligned}
&f(M_{\alpha}\langle Z \rangle)=\det(M_{\alpha})f(Z)=-f(Z)\,,&&&\\
&M_{\alpha}\langle Z \rangle=(\omega,-\overline{z},\tau)\,,&&&  \end{aligned}\]
which hold for all $ Z=(\omega,z,\tau)\in\mathcal{H}(L_{2})$ we conclude that $f$ vanishes on $\mathcal{D}_{\alpha}$. Using Theorem \ref{cor:construction of skew-symmetric form} we can divide $f$ by $\Phi$ and the quotient $f/\Phi$ is a symmetric modular form by Koecher's principle for automorphic forms, compare \cite{Ba}, p. 209. Now Theorem \ref{theorem:symmetric generators for graded ring} implies that $f$ is contained in the algebra generated by $\Phi$ and $\mathcal{A}^{\textit{sym}}$. The square of $\Phi$ is a symmetric form. Since $E_{4},\dots,E_{42}$ are algebraically independent we infer that there exists a polynomial $p$ with the desired property.
\end{proof}
The algebraic structure of $\mathcal{A}$ has already been determined in \cite{HU} where the authors refined the isomorphism given in Theorem \ref{theorem:Shiga}  to a bimeromorphic map of orbifolds. 
\vspace{5mm}

\noindent Following \cite{Kl} we can perform a different construction for $\Phi$ by considering a Rankin-Cohen type differential operator. 
\begin{prop}\label{prop:Wronski-Determinant}
 Let $f_{j}\in\mathcal{M}_{k_{j}}(\Gamma,1)$, $j=1,\dots 11$. Then $\{f_{1},\dots,f_{11}\}\in\mathcal{M}_{k}(\Gamma,\det)$, where
\[ \{f_{1},\dots,f_{11}\}:=\det\begin{pmatrix}
                              k_{1}f_{1}&\dots &k_{11}f_{11}\\[0.5em]
			      \frac{\partial f_{1}}{\partial \omega}&\dots &\frac{\partial f_{11}}{\partial \omega}\\[0.5em]
			      \frac{\partial f_{1}}{\partial z_{1}}&\dots &\frac{\partial f_{11}}{\partial z_{1}}\\
			      \vdots & \vdots & \vdots\\
			      \frac{\partial f_{1}}{\partial z_{8}}&\dots &\frac{\partial f_{11}}{\partial z_{8}}\\[0.5em]
			      \frac{\partial f_{1}}{\partial \tau}&\dots &\frac{\partial f_{11}}{\partial \tau}
                             \end{pmatrix}\quad,\quad k=10+\sum_{j=1}^{11}{k_{j}}\,.\]
If $f_{1},\dots,f_{11}$are algebraically independent, then $\{f_{1},\dots,f_{11}\}$ does not vanish identically.
\end{prop}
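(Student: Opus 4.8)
The plan is to verify first that the expression $\{f_1,\dots,f_{11}\}$ transforms as a modular form of weight $k = 10 + \sum_j k_j$ and character $\det$, and then to establish non-vanishing by a local argument. For the transformation law I would work on the affine model $\mathcal{H}(L_2)$, where a modular form of weight $k_j$ satisfies $f_j(g\langle Z\rangle) = j(g,Z)^{k_j} f_j(Z)$ for the automorphy factor $j(g,Z) = M\{Z\}$ attached to $g\in\Gamma$. Differentiating this identity in the twelve complex coordinates $(\omega, z_1,\dots,z_8,\tau)$ — equivalently, applying the chain rule to $Z\mapsto g\langle Z\rangle$ — produces, for the gradient vector $\nabla f_j = (\partial_\omega f_j, \partial_{z_1}f_j,\dots,\partial_\tau f_j)\transpose$, a relation of the shape
\[
\nabla(f_j\circ g)(Z) = j(g,Z)^{k_j}\Bigl( \bigl(\mathrm{D}_Z(g\langle\cdot\rangle)\bigr)\transpose \nabla f_j(g\langle Z\rangle) + k_j\, j(g,Z)^{-1}\bigl(\nabla_Z j(g,Z)\bigr) f_j(g\langle Z\rangle)\Bigr).
\]
Stacking the row $(k_1 f_1,\dots,k_{11}f_{11})$ on top of the ten gradient rows and taking the determinant, the $f$-independent term $k_j j(g,Z)^{-1}(\nabla_Z j)\,f_j$ appearing in each gradient row is proportional to the first row and so drops out of the determinant; what remains is $\det\bigl(\mathrm{D}_Z(g\langle\cdot\rangle)\bigr)$ times $\prod_j j(g,Z)^{k_j}$ times the original determinant. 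The final ingredient is the Jacobian identity $\det\mathrm{D}_Z(g\langle\cdot\rangle) = \det(g)\, j(g,Z)^{-10}$ for the ten-dimensional domain (this is standard for domains of type IV; the exponent $10$ is the dimension and $\det(g)=\pm1$ accounts for the character), which yields exactly the weight $10 + \sum_j k_j$ and the character $\det$. Holomorphy is clear, and regularity at the cusps follows from Koecher's principle since $\Gamma$ has rank large enough.

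For the non-vanishing claim I would argue by contradiction: suppose $\{f_1,\dots,f_{11}\}\equiv 0$. Then at every point $Z$ the $11\times 11$ matrix above is singular, i.e. the $11$ vectors $(k_j f_j(Z),\nabla f_j(Z))\transpose \in \C^{11}$ are linearly dependent. Consider the holomorphic map $F = (f_1,\dots,f_{11}) : \mathcal{H}(L_2)\to\C^{11}$; the vanishing of the Wronski-type determinant says precisely that the rank of $F$ is everywhere $\le 10$, hence by the constant-rank theorem on a suitable open set the image of $F$ lies in a proper analytic subset of $\C^{11}$, so $f_1,\dots,f_{11}$ satisfy a nontrivial analytic — in fact, since they are modular forms on an irreducible variety, polynomial — relation. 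This contradicts algebraic independence. (The cleanest way to see that an analytic relation upgrades to an algebraic one is that $\mathcal{A}^{\textit{sym}}$, or the full ring $\mathcal{A}$, has transcendence degree $11$ over $\C$ by Theorem \ref{theorem:Shiga}, so any $11$ elements that are algebraically independent generate a field of transcendence degree $11$, which forces the rank of their differential to be $11$ at a generic point.)

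The main obstacle is the bookkeeping in the transformation step: one must check carefully that every $f$-independent contribution to the gradient rows is genuinely a multiple of the first row so that it disappears from the determinant, and that the automorphy-factor Jacobian identity $\det\mathrm{D}_Z(g\langle\cdot\rangle) = \det(g)\,j(g,Z)^{-10}$ holds with exactly the exponent $10$ (the dimension of the tube domain) — this is where the mysterious summand $10$ in $k = 10 + \sum k_j$ comes from, and getting its value right is the crux. It suffices to verify the Jacobian identity on the generators of $\Gamma_2$ from Lemma \ref{lemma:biholomorphic correspondence of modular groups}, namely the translations $Z\mapsto Z+H$ (Jacobian $1$, $j\equiv 1$), the inversion $Z\mapsto -Z^{-1}$ (for which $j = \det Z$ and the Jacobian of the $10$-dimensional coordinate change is $(\det Z)^{-10}$, as follows from the identity $\det(\Psi(\omega,z,\tau)) = \omega\tau - N(z)$ in (\ref{determinant-form identity}) together with the cocycle property), and the reflection generating the $\det$-character. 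Everything else is routine linear algebra and the chain rule.
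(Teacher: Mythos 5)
The paper offers no internal argument here: its ``proof'' is a pointer to \cite{Kl}, Proposition 2.1, and \cite{AI}, Proposition 2.1, and your first half reconstructs the standard argument given in those sources. That half is sound in outline: differentiate $f_j(M\langle Z\rangle)=M\{Z\}^{k_j}f_j(Z)$, note that after multiplying the gradient rows by the Jacobian matrix of $Z\mapsto M\langle Z\rangle$ the $f$-independent contribution in row $b$ is $\bigl(\partial_b M\{Z\}/M\{Z\}\bigr)$ times the first row, hence cancels in the determinant, and use $\det \mathrm{D}_Z\bigl(M\langle\cdot\rangle\bigr)=\det(M)\,M\{Z\}^{-10}$ on the $10$-dimensional domain (checked on the generators of Lemma \ref{lemma:biholomorphic correspondence of modular groups}) to get weight $10+\sum_j k_j$ and character $\det$; your displayed chain-rule identity has the automorphy factors misplaced, but the bookkeeping you then describe is the correct one, and boundedness at the cusps is not an issue for Definition \ref{def:Modular form}.

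The non-vanishing half, however, has a genuine gap as written. You read the vanishing of the $11\times 11$ determinant as ``the rank of $F=(f_1,\dots,f_{11}):\mathcal{H}(L_2)\to\C^{11}$ is everywhere $\le 10$'' --- but that is automatic, since $\mathcal{H}(L_2)$ is $10$-dimensional, and for the same reason your closing claim that algebraic independence forces the differential to have rank $11$ at a generic point cannot be meant on $\mathcal{H}(L_2)$. The row $(k_1f_1,\dots,k_{11}f_{11})$ is not mere padding: by Euler's relation it is (up to sign) the radial derivative of the $f_j$ viewed as homogeneous functions of degree $-k_j$ on the $11$-dimensional affine cone $\mathcal{D}^{\bullet}$, so that $\{f_1,\dots,f_{11}\}$ is, in suitable coordinates, the honest Jacobian of eleven functions of eleven variables. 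Only after this identification does your argument close: if that Jacobian vanished identically the $f_j$ would be analytically dependent on $\mathcal{D}^{\bullet}$, while algebraic independence together with the fact that the graded ring has transcendence degree $11$ (Corollary \ref{cor:abstract ring of symmetric mf}) forces the Jacobian on the cone to have rank $11$ generically (every generator $E_k$ is algebraic over $\C(f_1,\dots,f_{11})$, so its differential lies in the span of the $df_j$ over the function field), a contradiction. With that repair your proof agrees with the one in the references the paper cites.
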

\begin{proof}
For a proof we refer to \cite{Kl}, Proposition 2.1, and \cite{AI}, Proposition 2.1.
\end{proof}
We state an immediate consequence of Theorem \ref{theorem:symmetric generators for graded ring} and Theorem \ref{theorem:structure full graded ring}.
\begin{cor}
 There is a constant $0\neq c\in \C$ such that \[\Phi=c\,\{E_{4},\dots,E_{42}\}\,.\]
\end{cor}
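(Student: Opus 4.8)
The plan is to show that the Rankin–Cohen–type bracket $\{E_4,\dots,E_{42}\}$ is a nonzero skew-symmetric cusp form of weight $252$ and then invoke uniqueness. First I would compute the weight: by Proposition \ref{prop:Wronski-Determinant} the bracket has weight $10+\sum_{k\in\Omega}k$, and since $\sum_{k\in\Omega}k = 4+10+12+16+18+22+24+28+30+36+42 = 242$, the bracket has weight $10+242 = 252$, matching the weight of $\Phi$ from Corollary \ref{cor:construction of skew-symmetric form}. Next, since the $E_k$, $k\in\Omega$, lie in $\mathcal{M}_k(\Gamma,1)$ (Theorem \ref{theorem:Fourier expansion of Maass lifts of JE series}) and are algebraically independent (Corollary \ref{cor:abstract ring of symmetric mf}, as used in the proof of Theorem \ref{theorem:structure full graded ring}), Proposition \ref{prop:Wronski-Determinant} guarantees both that $\{E_4,\dots,E_{42}\}\in\mathcal{M}_{252}(\Gamma,\det)$ and that it does not vanish identically. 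In particular it is a modular form transforming with the determinant character, i.e. a skew-symmetric modular form of weight $252$.

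Second, I would argue that this space is one-dimensional, so that $\{E_4,\dots,E_{42}\}$ must be a scalar multiple of $\Phi$. The cleanest route: by the proof of Theorem \ref{theorem:structure full graded ring}, every skew-symmetric modular form is divisible by $\Phi$, with symmetric quotient; hence $\{E_4,\dots,E_{42}\} = \Phi\cdot g$ for some $g\in\mathcal{A}^{\textit{sym}}$ of weight $0$. But the only weight-zero symmetric modular forms are constants (by Koecher's principle / the fact that $\mathcal{A}^{\textit{sym}}$ is a polynomial ring on positive-weight generators), so $g = c$ for some $c\in\C$, and $c\neq 0$ because the bracket is not identically zero. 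This gives $\Phi = c^{-1}\{E_4,\dots,E_{42}\}$, which is the claim after renaming the constant.

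The main obstacle is really just bookkeeping: one must confirm that the hypotheses of Proposition \ref{prop:Wronski-Determinant} apply verbatim to the $E_k$ — that they genuinely carry the \emph{trivial} character (so the bracket carries $\det$ and not some twist), and that the algebraic independence established for the restrictions $E_k^\ast$ lifts to the $E_k$ themselves, which it does since $\dim\mathcal{E}_k^\ast\le\dim\mathcal{E}_k\le\dim\mathcal{A}_k^{\textit{sym}}$ forces equality throughout (this is exactly the content of Theorem \ref{theorem:symmetric generators for graded ring}). A subtle point worth a sentence is that the differential operator in Proposition \ref{prop:Wronski-Determinant} is written in the affine coordinates $(\omega,z_1,\dots,z_8,\tau)$ on $\mathcal{H}(L_2)$, so one should note that $\mathcal{H}(L_2)$ has complex dimension $10$ and the determinant is genuinely of size $11\times 11$, matching the $11$ generators. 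Once these consistency checks are in place the corollary is immediate, and no new computation of Fourier coefficients is needed.
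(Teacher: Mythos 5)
Your proposal is correct and follows essentially the same route the paper intends when it calls the corollary an immediate consequence of Theorem \ref{theorem:symmetric generators for graded ring} and Theorem \ref{theorem:structure full graded ring}: Proposition \ref{prop:Wronski-Determinant} gives a nonvanishing form of weight $10+242=252$ with character $\det$, and the divisibility-by-$\Phi$ argument from the proof of Theorem \ref{theorem:structure full graded ring} reduces the quotient to a nonzero constant of weight $0$. Your added consistency checks (algebraic independence of the $E_k$, trivial character, $11\times 11$ size of the determinant) are exactly the points the paper relies on implicitly, so nothing is missing.
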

\vspace{2ex}
\textbf{Acknowledgement:} The authors thank H. Hashimoto and T. Ueda for helpful discussions and in particular for pointing out the proof of Corollary 4.2 from \cite{HU}.
\vspace{2ex}

\bibliography{bibliography_woitalla} 
\bibliographystyle{plain}

\end{document}